\documentclass[11pt,reqno]{amsart}

\author[E. Abdo]{Elie Abdo}
\address[E. Abdo]
{
    Department of Mathematics \\
    American University of Beirut \\
    Beirut 1107-2020\\
    Lebanon.
} 
\email{ea94@aub.edu.lb}

\usepackage{xfrac}

\usepackage[margin=1in]{geometry}
\usepackage{amsmath, amsthm, amssymb, yhmath, xfrac, mathtools}
\usepackage[most]{tcolorbox}
\usepackage[normalem]{ulem}

\usepackage{times}
\usepackage{color}
\usepackage{hyperref}

\newtheorem{theorem}{Theorem}
\newtheorem{proposition}{Proposition}[section]
\newtheorem{lemma}[proposition]{Lemma}
\theoremstyle{remark}
\newtheorem{remark}{Remark}
\numberwithin{equation}{section}

\newcommand{\eps}{\epsilon}
\newcommand{\osc}{\operatorname{osc}}
\newcommand{\dv}{\mathrm{div}\,}

\newcommand{\Hst}{\sum_{i=1}^N\int_\Omega\Big(c_i\ln\frac{c_i}{c_i^*}-c_i+c_i^*\Big)dx}
\newcommand{\Pot}{\frac{\eps}{2}\int_\Omega|\nabla(\Phi-\Phi^*)|^2dx}
\newcommand{\Kin}{\frac{1}{2K}\int_\Omega|u|^2dx}
\newcommand{\HstAt}[1]{\sum_{i=1}^N\int_\Omega\Big(c_i(#1)\ln\frac{c_i(#1)}{c_i^*}-c_i(#1)+c_i^*\Big)dx}
\newcommand{\PotAt}[1]{\frac{\eps}{2}\int_\Omega|\nabla(\Phi(#1)-\Phi^*)|^2dx}
\newcommand{\KinAt}[1]{\frac{1}{2K}\int_\Omega|u(#1)|^2dx}
\newcommand{\Diss}{\sum_{i=1}^N D_i\int_\Omega c_i|\nabla(\ln c_i+z_i\Phi)|^2dx}

\usepackage{cite}

\usepackage{MnSymbol,wasysym}
\usepackage{accents}

\newcommand{\R}{\mathbb R}

\title[Generalized Logarithmic Sobolev inequalities and Applications to NPNS systems]{Generalized Logarithmic Sobolev inequalities on Bounded Domains and exponential decay to Boltzmann states for Nernst--Planck--Navier--Stokes systems}

\date{\today}
\begin{document}

\maketitle
\vspace{0.5cm}

\begin{abstract}
We derive logarithmic Sobolev inequalities on bounded domains for nonnegative functions that are prescribed on a portion of the boundary, and for Boltzmann-weighted reference measures. We use them to address the long-time dynamics of the Nernst--Planck--Navier--Stokes system with Dirichlet, blocking and mixed boundary conditions for the ionic concentrations and a Dirichlet--Neumann boundary condition for the electric potential. We prove that solutions converge exponentially fast to the Boltzmann steady states. When at least one species of each sign of valence is in contact with a reservoir, the asymptotic rate of convergence is independent of the dielectric constant and depends only on the domain, the boundary data, the masses of the blocked species, the valences, the diffusion coefficients, and the fluid viscosity.    
\end{abstract}

\vspace{0.5cm}

{\bf{Keywords.}} logarithmic Sobolev inequalities, Dirichlet boundary conditions, mixed boundary conditions, Nernst--Planck--Navier-Stokes equations, Boltzmann states, Poisson--Boltzmann equation, long-time dynamics

\vspace{0.5cm}

\tableofcontents

\section{Introduction}
The Nernst--Planck--Navier--Stokes (NPNS) system describes the electrodiffusion of ions in a fluid. The ionic concentrations $c_1,\dots,c_N$ are transported by the fluid velocity $u$, diffuse, and drift along the electric field $-\nabla\Phi$; the potential $\Phi$ is generated by the charge density $\rho=\sum_i z_ic_i$ through a Poisson equation with dielectric constant $\eps$; and the fluid is driven by the electric force $-K\rho\nabla\Phi$. The constant $\eps$ is proportional to the square of the Debye length, and in most physical regimes it is very small. This motivates the study of the system in the limit $\eps\to0$, and, for the long-time dynamics, the search for decay rates that do not deteriorate as $\eps\to0$.

In \cite{abdo2025logarithmic}, certain logarithmic Sobolev inequalities on bounded smooth domains have been established for the sake of studying the entropic behavior of solutions to Nernst-Planck equations when time blows up. Quantitatively, it has been shown that  
\begin{equation}\label{eq:AL}
\int_\Omega g\ln\frac{g}{\bar g}\,dx \le C\,\|\nabla\sqrt g\|_{L^2}^2, \qquad \bar g = \frac{1}{|\Omega|}\int_\Omega g\,dx,
\end{equation}
holds for nonnegative smooth functions $g$, provided that $\Omega \subset \R^d$, with $d=2,3,4$. In \eqref{eq:AL}, $C>0$ is a positive constant that depends only on $d$ and the diameter of $\Omega$ \cite[Theorem 1 and Remark 1]{abdo2025logarithmic}.The latter estimate \eqref{eq:AL} was then used to prove exponential convergence of solutions of the Nernst--Planck system with homogeneous Neumann boundary conditions to constant steady states, at a rate depending only on the domain and the diffusion coefficients. As pointed out in \cite[Remark 2]{abdo2025logarithmic}, this $\eps$-independence relies crucially on the boundary conditions. For other boundary conditions, the steady states are nonconstant Boltzmann states, and the approach of \cite{abdo2025logarithmic} partially breaks down. 

In this paper, we consider the physically relevant situation in which the ions are in contact with reservoirs on a portion of the boundary (Dirichlet boundary conditions for the concentrations), are blocked on the rest of it (no-flux boundary conditions), and the potential is prescribed on a portion of the boundary. The steady states are then Boltzmann states $c_i^*=Z_ie^{-z_i\Phi^*}$, where $\Phi^*$ solves a Poisson--Boltzmann equation. In general they are not constant: they exhibit boundary layers of width $\sqrt\eps$, in which $\nabla\Phi^*$ is of order $\eps^{-1/2}$. Our main result (Theorem \ref{thm:main}) states that, nevertheless, provided at least one species of each sign of valence is in contact with a reservoir, solutions converge to the Boltzmann state at an exponential rate whose asymptotic value is independent of $\eps$ but depends only on the domain, the boundary data, the masses of the blocked species, the valences, the diffusion coefficients and the viscosity. Our proof is based on the following main ingredients:
\begin{enumerate}
\item \emph{Logarithmic Sobolev inequalities with boundary conditions.} For nonnegative $g$ with $g=\gamma>0$ on a portion $\Gamma$ of $\partial\Omega$, we prove that 
\[
\int_\Omega\Big(g\ln\frac{g}{\gamma}-g+\gamma\Big)dx \le C\Big(1+\frac{\bar g}{\gamma}\Big)\|\nabla\sqrt g\|_{L^2}^2,
\]
where $C$ is a positive constant that depends only on $d$, $\Omega$ and $\Gamma$. We also prove weighted versions of this inequality and of \eqref{eq:AL}, in which the constant depends on the weight only through its oscillation.
\item \emph{An $\eps$-independent pointwise bound for the Boltzmann state.} By the maximum principle for the Poisson--Boltzmann equation, $\|\Phi^*\|_{L^\infty}$ is bounded by a constant that depends only on the boundary data and on the masses of the blocked species, although $\nabla \Phi^*$ is not uniformly bounded. When blocked species are present, the Boltzmann constants of these species depend on $\Phi^*$ itself, and the bound relies on the reservoir species of both signs. Since only the oscillation of the weights $e^{-z_i \Phi}$ enters the weighted logarithmic Sobolev inequalities, this is exactly the information needed.
\item \emph{A comparison with the instantaneous Boltzmann state.} At each time we compare the solution with the Boltzmann state generated by its own potential $\Phi(t)$. The Nernst--Planck dissipation is then exactly a weighted Fisher information, and a convexity argument shows that the relative entropy with respect to the instantaneous Boltzmann state controls the full free energy relative to the steady state.
\end{enumerate}

For the purpose of this paper, as in \cite{abdo2025logarithmic}, we do not address the issue of global regularity of solutions, and instead assume that we are working with global classical solutions with positive concentrations. For the state of the art of global well-posedness and regularity we refer the reader to \cite{constantin2019nernst, constantin2021interior, constantin2021nernst, constantin2022nernst}. Exponential convergence to thermal equilibrium under mixed boundary conditions has a long history for drift-diffusion models of semiconductors, going back to \cite{gajewski1986basic}; the entropy method for Fokker--Planck type equations is surveyed in \cite{arnold2001convex}; and the electroneutral limit $\eps\to0$ of NPNS is studied in \cite{constantin2021interior}.

This paper is organized as follows. In Section \ref{sec:LSI}, we prove logarithmic Sobolev inequalities with boundary conditions, unweighted and weighted. Then, in Section \ref{sec:app}, we use these inequalities to prove exponential convergence of solutions of the NPNS system to Boltzmann states, with $\eps$-independent asymptotic rate. Lastly, in Appendix \ref{sec:elliptic}, we prove the elliptic estimates used in Section \ref{sec:app}: an $L^\infty$ estimate for the mixed Dirichlet--Neumann problem and $\eps$-independent maximum principles for the Poisson--Boltzmann equation, with and without blocked species.

\section{Logarithmic Sobolev Inequalities with Boundary Conditions}\label{sec:LSI}

Throughout this paper, we denote by $d$ a dimension obeying $d \in\{2,3,4\}$ and by $\Omega\subset\mathbb R^d$ a bounded connected domain with smooth boundary. We denote by $C$ a positive constant whose dependency on the parameters is stated whenever needed. Let $\Gamma\subset\partial\Omega$ be a relatively open subset with positive surface measure. We recall the Friedrichs inequality
\begin{equation}\label{eq:friedrichs}
\int_\Omega h^2\,dx \le C\int_\Omega|\nabla h|^2\,dx 
\end{equation} that holds for all $h\in H^1(\Omega)$ with $h=0$ on $\Gamma$ for some positive constant $C>0$ that depends only on $\Omega$ and $\Gamma$.

The following lemma will be frequently used in the paper:

\begin{lemma}\label{lem:elem}
For all $a\ge0$, it holds that 
\begin{align}
&a\ln a-a+1 \le (a-1)^2, \label{eq:elem1}\\
&3(a-1)^2 \le (2a+4)(a\ln a-a+1). \label{eq:elem2}
\end{align}
If $a\ge e^2$, we have 
\begin{equation}\label{eq:elem3}
a\ln a-a+1 \ge a.
\end{equation}
Moreover, the function $a\mapsto a\ln a-a+1$ is convex on $[0,\infty)$, decreasing on $[0,1]$ and increasing on $[1,\infty)$.
\end{lemma}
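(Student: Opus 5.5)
We work throughout with $f(a)=a\ln a-a+1$ (with $f(0)=1$), which is continuous on $[0,\infty)$ and smooth on $(0,\infty)$ with $f'(a)=\ln a$ and $f''(a)=1/a$. The final assertions of the lemma follow at once from these formulas. Since $f''>0$, $f$ is convex; convexity extends to the endpoint $0$ by continuity. Since $f'<0$ on $(0,1)$ and $f'>0$ on $(1,\infty)$, $f$ is decreasing on $[0,1]$ and increasing on $[1,\infty)$. We establish these facts first, since they drive the other inequalities.

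For \eqref{eq:elem1}, set $h(a)=(a-1)^2-f(a)$. Then $h(1)=0$, $h'(a)=2(a-1)-\ln a$ and $h''(a)=2-1/a$. We argue directly from the derivative. Since $\ln a\le a-1$, we have $h'(a)\ge a-1$, so $h'\ge0$ for $a\ge1$. For $a\in(0,1)$ we need $\ln a\ge 2(a-1)$, i.e. $h'\le 0$. The function $2(a-1)-\ln a$ vanishes at $a=1$ and is convex, with derivative $2-1/a$. It therefore has a minimum at $a=1/2$ and a second zero $a_0\in(0,1/2)$, so $h'$ is positive on $(0,a_0)$, negative on $(a_0,1)$ and positive on $(1,\infty)$. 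Consequently $h$ has its minima at $a=1$, where $h=0$, and possibly at $a=0$, where $h(0)=1-1=0$. Hence $h\ge0$ on $[0,\infty)$.

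For \eqref{eq:elem2}, set $k(a)=(2a+4)f(a)-3(a-1)^2$. Near $a=1$ we have $f(a)=\tfrac12(a-1)^2-\tfrac16(a-1)^3+\dots$, so $k=O((a-1)^3)$; in fact $k(1)=k'(1)=k''(1)=0$. We differentiate three times:
\[
k'=2f+(2a+4)\ln a-6(a-1),\quad k''=4\ln a+\frac{2a+4}{a}-6=4\ln a+\frac4a-4,\quad k'''=\frac4a-\frac4{a^2}.
\]
Thus $k''$ has its minimum at $a=1$, where it equals $0$, so $k''\ge0$. Since $k'(1)=0$, the function $k'$ is negative on $(0,1)$ and positive on $(1,\infty)$, and therefore $k\ge k(1)=0$. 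The value at $a=0$ is $k(0)=4-3=1\ge0$, which is covered by continuity. This calculus is the only step with any real content, and it reduces to the clean expression for $k''$ above.

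For \eqref{eq:elem3}, we need $a\ln a-2a+1\ge0$ for $a\ge e^2$. This holds because $a(\ln a-2)\ge0$ there, so the left-hand side is at least $1>0$.
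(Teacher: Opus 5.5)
Your proof is correct. For \eqref{eq:elem2}, \eqref{eq:elem3} and the convexity and monotonicity claims it follows the paper's argument. In particular, for \eqref{eq:elem2} the paper also computes the third derivative $4(a-1)/a^2$, deduces that the second derivative is nonnegative because it vanishes at $a=1$, and concludes from the vanishing of the function and its first derivative at $1$.

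The only genuine difference is in \eqref{eq:elem1}. The paper multiplies $\ln a\le a-1$ by $a\ge 0$ to get $a\ln a\le a^2-a$, which is \eqref{eq:elem1} in one line on all of $[0,\infty)$. Your sign analysis of $h'=2(a-1)-\ln a$ is valid but much longer, since it needs the extra zero $a_0\in(0,1/2)$ and the separate check $h(0)=0$. Your sentence ``for $a\in(0,1)$ we need $\ln a\ge 2(a-1)$'' is false near $a=0$, where $h'\to+\infty$. Your subsequent case analysis handles this correctly, but that sentence should be deleted.
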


\begin{proof} The inequality \eqref{eq:elem1} follows from $\ln a \le a - 1$, which gives $a \ln a \le a^2 - a$. The inequality \eqref{eq:elem3} follows from $\ln a \ge 2$. As for \eqref{eq:elem2}, we let $h(a) = (2a+4)(a\ln a - a + 1) - 3(a-1)^2$ for $a > 0$. A direct computation gives $h(1) = h'(1) = h''(1) = 0$ and $h'''(a) = 4(a-1)/a^2$. Hence $h''$ attains its minimum on $(0,\infty)$ at $a = 1$, so $h'' \ge 0$, and $h$ is convex with $h(1) = h'(1) = 0$. Therefore $h \ge 0$ on $(0,\infty)$, and \eqref{eq:elem2} at $a = 0$ follows by continuity. The last statement follows from the fact that the derivative of $a \mapsto a \ln a - a + 1$ is $\ln a$.
\end{proof}

\begin{proposition}\label{prop:dirLSI}
Let $\gamma>0$ and let $g\ge0$ be such that $g,\sqrt g\in H^1(\Omega)$ and $g=\gamma$ on $\Gamma$. Then
\begin{equation}\label{eq:dirLSI}
\int_\Omega\Big(g\ln\frac{g}{\gamma}-g+\gamma\Big)dx \le C\Big(1+\frac{\bar g}{\gamma}\Big)\|\nabla\sqrt g\|_{L^2}^2,
\end{equation}
where $\bar g = \frac{1}{|\Omega|}\int_\Omega g\,dx$ is the spatial average of $g$ over $\Omega$ and $C$ is a positive  constant that depends only on $d$, $\Omega$ and $\Gamma$.
\end{proposition}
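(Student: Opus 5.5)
Normalize by setting $a=g/\gamma$, so that $a\ge0$, $a=1$ on $\Gamma$, and \eqref{eq:dirLSI} becomes
\[
\int_\Omega (a\ln a-a+1)\,dx \le C(1+\bar a)\|\nabla\sqrt a\|_{L^2}^2 .
\]
Both sides scale linearly in $\gamma$, since $\|\nabla\sqrt g\|^2=\gamma\|\nabla\sqrt a\|^2$. The natural auxiliary function is $h=\sqrt a-1$. It lies in $H^1(\Omega)$ and vanishes on $\Gamma$, so the Friedrichs inequality \eqref{eq:friedrichs} gives
\[
\int_\Omega(\sqrt a-1)^2\,dx\le C\|\nabla\sqrt a\|_{L^2}^2 .
\]
The task is therefore to bound the integrand $\phi(a)=a\ln a-a+1$ pointwise by $(\sqrt a-1)^2$, with a factor that is either bounded or controllable by $1+\bar a$.

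The plan is to split $\Omega$ according to the size of $a$, say into $\{a\le e^2\}$ and $\{a>e^2\}$.

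\textbf{Small values.} On $\{a\le e^2\}$, I would use \eqref{eq:elem1} together with $(a-1)^2=(\sqrt a-1)^2(\sqrt a+1)^2\le (e+1)^2(\sqrt a-1)^2$. This bounds that part of the entropy by $C\|\nabla\sqrt a\|^2$.

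\textbf{Large values.} On $\{a>e^2\}$ the integrand behaves like $a\ln a$, while $(\sqrt a-1)^2\sim a$, so a pointwise comparison loses a logarithm. Here I would not use Friedrichs alone but a genuine log-Sobolev ingredient. Write $a\ln a = a\ln\frac{a}{\bar a}+a\ln\bar a$ and use the Neumann-type inequality \eqref{eq:AL} from \cite{abdo2025logarithmic}:
\[
\int_\Omega a\ln\frac{a}{\bar a}\,dx\le C\|\nabla\sqrt a\|^2 .
\]
On $\{a>e^2\}$ this requires the pointwise bound $\phi(a)\le a\ln(a/\bar a)+a\ln_+\bar a + (\text{nonneg})$. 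Since $a\ln(a/\bar a)$ can be negative on parts of $\Omega$, the cleaner route is globally:
\[
\int_\Omega\phi(a)\,dx=\int_\Omega a\ln\frac a{\bar a}\,dx+|\Omega|\,\phi(\bar a).
\]
This identity holds because $\int a=|\Omega|\bar a$. The first term is handled by \eqref{eq:AL}, so everything reduces to bounding $|\Omega|\phi(\bar a)$.

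\textbf{The constant term.} This is the main obstacle. I expect to show
\[
\phi(\bar a)\le C(1+\bar a)\,\frac{1}{|\Omega|}\int_\Omega(\sqrt a-1)^2\,dx,
\]
and then apply Friedrichs. By Cauchy--Schwarz, $\overline{\sqrt a}\le\sqrt{\bar a}$.

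If $\bar a\le e^2$, use $\phi(\bar a)\le(\bar a-1)^2$. We have $|\bar a-1|\le \overline{|a-1|}$, and $|a-1|=|\sqrt a-1|(\sqrt a+1)$. Cauchy--Schwarz then gives
\[
(\bar a-1)^2\le \overline{(\sqrt a-1)^2}\;\overline{(\sqrt a+1)^2}\le C(1+\bar a)\,\overline{(\sqrt a-1)^2},
\]
which is exactly the desired form.

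If $\bar a>e^2$, the same Cauchy--Schwarz bound still yields $(\bar a-1)^2\le 2(1+\bar a)\overline{(\sqrt a-1)^2}$. Since $\phi(\bar a)\le(\bar a-1)^2$ holds for all $\bar a\ge0$, the argument is in fact uniform and no case split is needed.

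Combining the three steps gives
\[
\int_\Omega\phi(a)\,dx\le C\|\nabla\sqrt a\|^2+C(1+\bar a)\|\nabla\sqrt a\|^2,
\]
and undoing the normalization yields \eqref{eq:dirLSI}. The constant depends only on $d$ and $\Omega$ (through \eqref{eq:AL}) and on $\Omega,\Gamma$ (through \eqref{eq:friedrichs}). The step to double-check is the entropy decomposition identity and the applicability of \eqref{eq:AL} to $a\in H^1$ with $\sqrt a\in H^1$, which should follow by density.
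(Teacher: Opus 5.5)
Your argument is correct and is essentially the paper's proof. Once you drop the pointwise small/large split, you use the same decomposition $\int_\Omega\phi(a)\,dx=\int_\Omega a\ln(a/\bar a)\,dx+|\Omega|\phi(\bar a)$, bound the first term by \eqref{eq:AL}, and bound the second by \eqref{eq:elem1}, the factorization $a-1=(\sqrt a-1)(\sqrt a+1)$, Cauchy--Schwarz and the Friedrichs inequality \eqref{eq:friedrichs}; the paper simply works with $g$ and $\gamma$ directly instead of normalizing to $a=g/\gamma$.
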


\begin{proof}
Since $\int_\Omega g\ln\bar g\,dx=|\Omega|\bar g\ln\bar g$ and $\int_\Omega(-g+\gamma)\,dx=|\Omega|(\gamma-\bar g)$, we have
\[
\int_\Omega\Big(g\ln\frac{g}{\gamma}-g+\gamma\Big)dx = \int_\Omega g\ln\frac{g}{\bar g}\,dx + |\Omega|\Big(\bar g\ln\frac{\bar g}{\gamma}-\bar g+\gamma\Big).
\]
 The first term on the right-hand side can be  bounded by 
 \[ 
 \int_\Omega g\ln\frac{g}{\bar g}\,dx \le C \|\nabla\sqrt g\|_{L^2}^2
 \]
using \eqref{eq:AL}.  For the second term, we make use of the elementary inequality  \eqref{eq:elem1} with $a=\bar g/\gamma$ and obtain
\[
|\Omega|\Big(\bar g\ln\frac{\bar g}{\gamma}-\bar g+\gamma\Big) = |\Omega|\gamma\Big(\frac{\bar g}{\gamma}\ln\frac{\bar g}{\gamma}-\frac{\bar g}{\gamma}+1\Big) \le |\Omega|\gamma\Big(\frac{\bar g}{\gamma}-1\Big)^2 = \frac{|\Omega|}{\gamma}(\bar g-\gamma)^2.
\]
Now we write
\[
|\Omega|(\bar g-\gamma) = \int_\Omega(g-\gamma)\,dx = \int_\Omega(\sqrt g-\sqrt\gamma)(\sqrt g+\sqrt\gamma)\,dx.
\]
The function $\sqrt g-\sqrt\gamma$ belongs to $H^1(\Omega)$ and vanishes on $\Gamma$. By the Cauchy--Schwarz inequality and \eqref{eq:friedrichs}, we have
\begin{align*}
|\Omega|^2(\bar g-\gamma)^2 &\le \left(\int_\Omega(\sqrt g-\sqrt\gamma)^2dx\right)\left(\int_\Omega(\sqrt g+\sqrt\gamma)^2dx\right) \le C|\Omega|(\bar g+\gamma)\|\nabla\sqrt g\|_{L^2}^2.
\end{align*}
Dividing by $|\Omega|\gamma$, we deduce that
\[
\frac{|\Omega|}{\gamma}(\bar g-\gamma)^2 \le C\Big(1+\frac{\bar g}{\gamma}\Big)\|\nabla\sqrt g\|_{L^2}^2.
\]
Putting these estimates together, we obtain the desired estimate \eqref{eq:dirLSI}.
\end{proof}

We now consider weighted measures:

\begin{theorem}\label{thm:wLSI}
Let $V\in L^\infty(\Omega)$, $w=e^{-V}$, and  $\osc V=\sup V-\inf V$. Let $g\ge0$ and suppose that $f=g/w$ satisfies $f,\sqrt f\in H^1(\Omega)$. Then it holds that 
\begin{equation}\label{eq:wLSI}
\int_\Omega g\ln\Big(\frac{g}{w}\,\frac{\int_\Omega w\,dx}{\int_\Omega g\,dx}\Big)dx \le Ce^{\osc V}\int_\Omega w\,\big|\nabla\sqrt{g/w}\big|^2dx,
\end{equation}
where $C$ is a positive constant depending only on $d$ and the diameter of $\Omega$.
If, moreover, $g/w=Z$ on $\Gamma$ for some constant $Z>0$, then
\begin{equation}\label{eq:wdirLSI}
\int_\Omega\Big(g\ln\frac{g}{Zw}-g+Zw\Big)dx \le Ce^{\osc V}\Big(1+\frac{\int_\Omega g\,dx}{Z\int_\Omega w\,dx}\Big)\int_\Omega w\,\big|\nabla\sqrt{g/w}\big|^2dx,
\end{equation}
where $C$ is a positive constant that depends only on $d$, $\Omega$ and $\Gamma$.
\end{theorem}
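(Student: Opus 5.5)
The plan is to reduce both weighted inequalities to their unweighted counterparts, \eqref{eq:AL} and Proposition \ref{prop:dirLSI}, applied to $f=g/w$, and to pay for the change of measure by powers of $e^{\sup V}$ and $e^{-\inf V}$. For this I will use the variational characterization of relative entropy. For a positive measure $\mu$ and $F\ge0$, the quantity
\[
\mathrm{Ent}_\mu(F)=\int F\ln\frac{F\,\mu(\Omega)}{\int F\,d\mu}\,d\mu
\]
satisfies
\[
\mathrm{Ent}_\mu(F)=\inf_{t>0}\int\big(F\ln\tfrac{F}{t}-F+t\big)\,d\mu .
\]
The infimum is attained at $t=\int F d\mu/\mu(\Omega)$. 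The integrand $\phi_t(F)=F\ln(F/t)-F+t$ is nonnegative, so monotonicity under comparison of densities is available.

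For \eqref{eq:wLSI}, take $d\mu=w\,dx$ and $F=f$. The left-hand side of \eqref{eq:wLSI} is then exactly $\mathrm{Ent}_\mu(f)$. Choosing $t=\bar f$ (the Lebesgue average) and using $w\le e^{-\inf V}$ together with $\phi_t\ge0$, I get
\[
\mathrm{Ent}_\mu(f)\le\int_\Omega \phi_{\bar f}(f)\,w\,dx\le e^{-\inf V}\int_\Omega\phi_{\bar f}(f)\,dx=e^{-\inf V}\int_\Omega f\ln\frac{f}{\bar f}\,dx,
\]
where the last equality holds because $\int(-f+\bar f)dx=0$. Applying \eqref{eq:AL} to $f$ bounds this by $Ce^{-\inf V}\|\nabla\sqrt f\|_{L^2}^2$. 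Since $w\ge e^{-\sup V}$, we have $\|\nabla\sqrt f\|^2_{L^2}\le e^{\sup V}\int w|\nabla\sqrt f|^2dx$, which yields the factor $e^{\osc V}$. The constant is that of \eqref{eq:AL}, so it depends only on $d$ and $\operatorname{diam}\Omega$.

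For \eqref{eq:wdirLSI}, the left-hand side equals $\int_\Omega \phi_Z(f)\,w\,dx$, and $f=Z$ on $\Gamma$. Using $w\le e^{-\inf V}$ and $\phi_Z\ge 0$, this is at most $e^{-\inf V}\int_\Omega\phi_Z(f)\,dx$. Proposition \ref{prop:dirLSI} with $\gamma=Z$ bounds that by $Ce^{-\inf V}(1+\bar f/Z)\|\nabla\sqrt f\|_{L^2}^2$. As before, $\|\nabla\sqrt f\|^2\le e^{\sup V}\int w|\nabla\sqrt f|^2$.

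The remaining task, which I expect to be the only point needing care, is converting $\bar f/Z$ into the ratio $\int g\,dx/(Z\int w\,dx)$ appearing in the statement. We have
\[
\bar f=\frac{1}{|\Omega|}\int \frac{g}{w}\,dx\le \frac{e^{\sup V}}{|\Omega|}\int g\,dx \qquad\text{and}\qquad \int w\,dx\le e^{-\inf V}|\Omega|,
\]
so $\bar f\le e^{\osc V}\int g/\int w$. This costs an extra factor $e^{\osc V}$, giving $e^{2\osc V}$ on the cross term, which does not match the stated exponent.

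To avoid the extra factor, I will split the bound from Proposition \ref{prop:dirLSI} into its two pieces. The entropy-about-mean piece is handled as in the first part, with the mean taken with respect to $\mu$. The second piece is $\mu(\Omega)\,\phi_Z(m)$ with $m=\int g/\int w$. I will redo the Friedrichs argument directly in the weighted measure: write $\mu(\Omega)(m-Z)=\int(\sqrt f-\sqrt Z)(\sqrt f+\sqrt Z)\,w\,dx$ and apply Cauchy--Schwarz in $L^2(w\,dx)$. This gives
\[
\mu(\Omega)^2(m-Z)^2\le \Big(\int(\sqrt f-\sqrt Z)^2w\,dx\Big)\,2\mu(\Omega)(m+Z).
\]
By \eqref{eq:friedrichs} and the sup/inf bounds on $w$, the first factor is at most $Ce^{\osc V}\int w|\nabla\sqrt f|^2dx$. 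Dividing by $\mu(\Omega)Z$ then produces exactly $Ce^{\osc V}(1+m/Z)\int w|\nabla\sqrt f|^2$, which matches the stated exponent.
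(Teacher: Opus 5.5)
Your proposal is correct and follows the paper's proof essentially step for step. For \eqref{eq:wLSI} the paper also minimizes $t\mapsto\int_\Omega w\,(f\ln\frac{f}{t}-f+t)\,dx$ at the weighted mean $t_*=\int_\Omega g\,dx/\int_\Omega w\,dx$, compares with $t=\bar f$, and applies \eqref{eq:AL}; for \eqref{eq:wdirLSI} it takes your second route, splitting at $t_*$, bounding the mean-value piece by $\frac{(t_*-Z)^2}{Z}\int_\Omega w\,dx$ via \eqref{eq:elem1} (a step you leave implicit), and running the Cauchy--Schwarz/Friedrichs argument in $L^2(w\,dx)$, which avoids the extra factor $e^{\osc V}$ that a direct appeal to Proposition~\ref{prop:dirLSI} would cost.
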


\begin{proof}
We denote by $\bar f=|\Omega|^{-1}\int_\Omega f\,dx$ the average of $f$ over $\Omega$. For $t>0$, we define
\[
G(t) = \int_\Omega w\Big(f\ln\frac{f}{t}-f+t\Big)dx.
\]
We have $G'(t)=\int_\Omega w(1-f/t)\,dx$, which vanishes only at
\[
t_* = \frac{\int_\Omega wf\,dx}{\int_\Omega w\,dx} = \frac{\int_\Omega g\,dx}{\int_\Omega w\,dx}.
\]
Moreover, $G''(t)=t^{-2}\int_\Omega wf\,dx\ge0$. Therefore, $G$ attains its minimum at $t_*$. Since $\int_\Omega w(-f+t_*)\,dx=0$, it follows that 
\[
G(t_*) = \int_\Omega wf\ln\frac{f}{t_*}\,dx = \int_\Omega g\ln\Big(\frac{g}{w}\,\frac{\int_\Omega w\,dx}{\int_\Omega g\,dx}\Big)dx.
\]
As $f\ln(f/t)-f+t$ is nonnegative for every $t>0$ (by Lemma \ref{lem:elem}), we can  take $t=\bar f$, use the cancellation  $\int_\Omega(-f+\bar f)\,dx=0$,   apply the logarithmic Sobolev inequality \eqref{eq:AL} to $f$, and obtain
\begin{align*}
\int_\Omega g\ln\Big(\frac{g}{w}\,\frac{\int_\Omega w\,dx}{\int_\Omega g\,dx}\Big)dx = G(t_*) \le G(\bar f) &\le \sup w\int_\Omega\Big(f\ln\frac{f}{\bar f}-f+\bar f\Big)dx = \sup w\int_\Omega f\ln\frac{f}{\bar f}\,dx\\
&\le C \sup w \int_\Omega|\nabla\sqrt f|^2dx \le C\frac{\sup w}{\inf w}\int_\Omega w|\nabla\sqrt f|^2dx,
\end{align*}
yielding \eqref{eq:wLSI}, since $\sup w/\inf w=e^{\osc V}$. 

As for \eqref{eq:wdirLSI}, we let $t_*$ be as above and compute
\begin{align*}
\int_\Omega\Big(g\ln\frac{g}{Zw}-g+Zw\Big)dx &= \int_\Omega w\Big(f\ln\frac{f}{Z}-f+Z\Big)dx\\
&= \int_\Omega wf\ln\frac{f}{t_*}\,dx + \ln\frac{t_*}{Z}\int_\Omega wf\,dx + (Z-t_*)\int_\Omega w\,dx\\
&= \int_\Omega g\ln\Big(\frac{g}{w}\,\frac{\int_\Omega w\,dx}{\int_\Omega g\,dx}\Big)dx + Z\int_\Omega w\,dx\,\Big(\frac{t_*}{Z}\ln\frac{t_*}{Z}-\frac{t_*}{Z}+1\Big),
\end{align*} after making use of the identity $\int_{\Omega} t_* w\, dx = \int_{\Omega} fw\, dx$.  
The first term is controlled by \eqref{eq:wLSI}. For the second term, we use \eqref{eq:elem1} with $a=t_*/Z$, which gives
\[
Z\int_\Omega w\,dx\,\Big(\frac{t_*}{Z}\ln\frac{t_*}{Z}-\frac{t_*}{Z}+1\Big) \le \frac{(t_*-Z)^2}{Z}\int_\Omega w\,dx.
\]
In view of the identity
\[
(t_*-Z)\int_\Omega w\,dx=\int_\Omega w(f-Z)\,dx=\int_\Omega w(\sqrt f-\sqrt Z)(\sqrt f+\sqrt Z)\,dx,
\] the fact that $\sqrt f-\sqrt Z$ vanishes on $\Gamma$, the Cauchy--Schwarz inequality,  and the Friedrichs inequality \eqref{eq:friedrichs}, we estimate
\begin{align*}
(t_*-Z)^2\Big(\int_\Omega w\,dx\Big)^2 &\le \left(\int_\Omega w(\sqrt f-\sqrt Z)^2dx\right) \left(\int_\Omega w(\sqrt f+\sqrt Z)^2dx\right)\\
&\le C\sup w \left(\int_\Omega|\nabla\sqrt f|^2dx\right) \left(\int_\Omega w(f+Z)\,dx
\right)\\
&\le Ce^{\osc V}\left(\int_\Omega w|\nabla\sqrt f|^2dx\right)\,(t_*+Z)\left(\int_\Omega w\,dx\right).
\end{align*}
Dividing both sides by $Z\int_\Omega w\,dx$, we infer that
\[
\frac{(t_*-Z)^2}{Z}\int_\Omega w\,dx \le Ce^{\osc V}\Big(1+\frac{t_*}{Z}\Big)\int_\Omega w|\nabla\sqrt f|^2dx.
\]
Since $t_*/Z=\int_\Omega g\,dx/(Z\int_\Omega w\,dx)$, adding this bound to \eqref{eq:wLSI} yields \eqref{eq:wdirLSI}. 
\end{proof}

\section{Application: Long-Time Behavior of Solutions to Electrodiffusion Models}\label{sec:app}

\subsection{The model and its Boltzmann states}

We consider the Nernst--Planck--Navier--Stokes system
\begin{align}
&\partial_tc_i + u\cdot\nabla c_i = D_i\,\dv(\nabla c_i+z_ic_i\nabla\Phi), \qquad i=1,\dots,N, \label{eq:NP}\\
&-\eps\Delta\Phi = \rho = \sum_{i=1}^Nz_ic_i, \label{eq:Poisson}\\
&\partial_tu + u\cdot\nabla u - \nu\Delta u + \nabla p = -K\rho\nabla\Phi, \qquad \dv u=0, \label{eq:NS}
\end{align}
in $\Omega\times(0,\infty)$. The $D_i>0$ are the diffusivity coefficients, the $z_i\in\mathbb R$ are the valences, $\eps>0$ is the dielectric constant, $\nu>0$ is the kinematic viscosity and $K>0$ is a coupling constant.

Let $\Sigma_D\subset\partial\Omega$ be a relatively open subset with positive surface measure and $\Sigma_N=\partial\Omega\setminus\Sigma_D$, which may be empty. We denote by $H^1_{\Sigma_D}(\Omega)$ the space of functions in $H^1(\Omega)$ whose trace vanishes on $\Sigma_D$. For each $i$, let $\Gamma_i\subset\Sigma_D$ be relatively open, and either empty or of positive surface measure. We impose the boundary conditions
\begin{equation}\label{eq:bc}
\begin{gathered}
\Phi=W \ \text{on } \Sigma_D, \qquad \partial_n\Phi=0 \ \text{on } \Sigma_N, \qquad u=0 \ \text{on } \partial\Omega,\\
c_i=\gamma_i \ \text{on } \Gamma_i, \qquad \partial_nc_i+z_ic_i\partial_n\Phi=0 \ \text{on } \partial\Omega\setminus\Gamma_i,
\end{gathered}
\end{equation}
where $W$ and $\gamma_i>0$ are smooth, time-independent functions. Thus the $i$-th species is in contact with a reservoir on $\Gamma_i$ and is blocked on the rest of the boundary; we call it a blocking species if $\Gamma_i=\emptyset$. We assume the equilibrium compatibility condition
\begin{equation}\label{eq:compat}
\gamma_i\,e^{z_iW} \equiv Z_i \ \text{ is a positive constant on } \Gamma_i \qquad \text{whenever } \Gamma_i\neq\emptyset.
\end{equation}

These boundary conditions include, in particular:
\begin{enumerate}
    \item blocking boundary conditions for all species together with a Dirichlet boundary condition for the potential on the whole boundary, which is the classical Poisson--Boltzmann setting; 
    \item Dirichlet boundary conditions for all species; 
    \item selective boundary conditions, where some species are in contact with reservoirs and others are blocked (boundary conditions of this type are considered in \cite{constantin2019nernst});
    \item and their combinations on different portions of the boundary.
\end{enumerate} 

If the $i$-th species is blocking, its mass is conserved. Indeed, integrating \eqref{eq:NP} and using $\dv u=0$, $u=0$ on $\partial\Omega$ and $\partial_nc_i+z_ic_i\partial_n\Phi=0$ on $\partial\Omega$, we have
\begin{equation}\label{eq:mass}
\frac{d}{dt}\int_\Omega c_i\,dx = -\int_\Omega\dv(uc_i)\,dx + D_i\int_{\partial\Omega}(\partial_nc_i+z_ic_i\partial_n\Phi)\,dS = 0.
\end{equation}

The Boltzmann state associated with \eqref{eq:NP}--\eqref{eq:bc} is given by $u^*=0$ and
\begin{equation}\label{eq:Boltz}
c_i^* = Z_i\,e^{-z_i\Phi^*}, \qquad Z_i \text{ given by \eqref{eq:compat} if } \Gamma_i\neq\emptyset, \qquad Z_i = \frac{\int_\Omega c_i(0)\,dx}{\int_\Omega e^{-z_i\Phi^*}dx} \ \text{ if } \Gamma_i=\emptyset,
\end{equation}
where $\Phi^*$ solves the Poisson--Boltzmann equation
\begin{equation}\label{eq:PB}
-\eps\Delta\Phi^* = \rho^* = \sum_{i=1}^Nz_ic_i^*, \qquad \Phi^*=W \ \text{on } \Sigma_D, \qquad \partial_n\Phi^*=0 \ \text{on } \Sigma_N.
\end{equation}
Since $\nabla c_i^*=-z_ic_i^*\nabla\Phi^*$, we have $\rho^*\nabla\Phi^*=-\nabla\sum_ic_i^*$, so that $(u^*,c_1^*,\dots,c_N^*,\Phi^*)$ with pressure $p^*=K\sum_ic_i^*$ is a steady state of \eqref{eq:NP}--\eqref{eq:NS} satisfying \eqref{eq:bc}. Note that $c_i^*=\gamma_i$ on $\Gamma_i$ by \eqref{eq:compat}, and that $\ln c_i^*=\ln Z_i-z_i\Phi^*$. When blocking species are present, \eqref{eq:PB} is a nonlocal problem, since the constants $Z_i$ of the blocking species depend on $\Phi^*$. Throughout this paper, we assume that a Boltzmann state $\Phi^*\in H^1(\Omega)\cap L^\infty(\Omega)$ exists; when all species are in contact with a reservoir, existence and uniqueness follow from the strict monotonicity of the nonlinearity (see Appendix \ref{sec:elliptic}), and in general one may minimize the associated convex free energy under the mass constraints.

\subsection{The dissipative structure}

\begin{proposition}\label{prop:energy}
Let $(u,c_1,\dots,c_N,\Phi)$ be a global classical solution of \eqref{eq:NP}--\eqref{eq:NS} with boundary conditions \eqref{eq:bc} and positive concentrations. Then
\begin{multline}\label{eq:energy}
\frac{d}{dt}\Big[\Hst + \Pot + \Kin\Big]\\
+ \Diss + \frac{\nu}{K}\int_\Omega|\nabla u|^2dx = 0.
\end{multline}
\end{proposition}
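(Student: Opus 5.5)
The plan is to differentiate each of the three pieces of the free energy separately and add the results; the cross terms should cancel exactly. Throughout I will use the electrochemical potential $\mu_i=\ln c_i+z_i\Phi$. With it the flux in \eqref{eq:NP} is $\nabla c_i+z_ic_i\nabla\Phi=c_i\nabla\mu_i$.

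\emph{Entropy part.} Since $\ln c_i^*=\ln Z_i-z_i\Phi^*$, the time derivative of the entropy term is $\sum_i\int_\Omega \partial_tc_i\,(\ln c_i-\ln c_i^*)\,dx=\sum_i\int_\Omega\partial_tc_i\,(\mu_i-\ln Z_i-z_i(\Phi-\Phi^*))\,dx$. I substitute \eqref{eq:NP} and integrate by parts.
\begin{itemize}
\item The transport term gives $-\int_\Omega u\cdot\nabla c_i\,\mu_i\,dx$. Here I use $\int_\Omega u\cdot\nabla c_i\ln c_i\,dx=\int_\Omega u\cdot\nabla(c_i\ln c_i-c_i)\,dx=0$ by $\dv u=0$ and $u=0$ on $\partial\Omega$. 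The transport term therefore contributes $-\sum_i z_i\int_\Omega u\cdot\nabla c_i\,\Phi\,dx=\int_\Omega \rho\,u\cdot\nabla\Phi\,dx$.
\item For the diffusion term, the factor $\mu_i-\ln Z_i$ vanishes on $\Gamma_i$: there $\ln c_i+z_iW=\ln(\gamma_ie^{z_iW})=\ln Z_i$ by \eqref{eq:compat}. On $\partial\Omega\setminus\Gamma_i$ the flux $c_i\partial_n\mu_i$ vanishes. So the boundary terms drop and I get $-D_i\int_\Omega c_i|\nabla\mu_i|^2\,dx$.
\item Constants integrate against $\partial_tc_i$ without boundary issues once the boundary term above is absent. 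I keep the remaining term $-\sum_i z_i\int_\Omega\partial_tc_i(\Phi-\Phi^*)\,dx=-\int_\Omega\partial_t\rho\,(\Phi-\Phi^*)\,dx$ separately; it is cancelled by the potential part.
\end{itemize}

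\emph{Potential part.} The derivative is $\eps\int_\Omega\nabla(\Phi-\Phi^*)\cdot\nabla\partial_t\Phi\,dx$. Integrating by parts moves the derivative onto $\Phi-\Phi^*$ or onto $\partial_t\Phi$. The boundary term vanishes because $\Phi-\Phi^*=0$ on $\Sigma_D$ and $\partial_n\partial_t\Phi=0$ on $\Sigma_N$. This gives $\int_\Omega(\Phi-\Phi^*)(-\eps\Delta\partial_t\Phi)\,dx=\int_\Omega(\Phi-\Phi^*)\partial_t\rho\,dx$, using \eqref{eq:Poisson}. It exactly cancels the last entropy term.

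\emph{Kinetic part.} I take the inner product of \eqref{eq:NS} with $u/K$. The pressure and nonlinear terms vanish by incompressibility and $u=0$ on $\partial\Omega$. This gives $\frac{d}{dt}\Kin+\frac{\nu}{K}\int_\Omega|\nabla u|^2dx=-\int_\Omega\rho\,u\cdot\nabla\Phi\,dx$, which cancels the transport contribution from the entropy part. Summing the three parts yields \eqref{eq:energy}.

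\emph{Main obstacle.} The delicate point is the boundary bookkeeping in the diffusion term. It is why I use $\mu_i-\ln Z_i$ (vanishing on $\Gamma_i$) rather than $\ln c_i$ as the test function. For blocking species there is no $\Gamma_i$, and the constant $\ln Z_i$ simply drops by mass conservation \eqref{eq:mass}. Regularity of the classical solution and positivity of $c_i$ justify all integrations by parts.
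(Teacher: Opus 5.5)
Your proof is correct and is essentially the paper's argument. It uses the same test function $\ln c_i+z_i\Phi-\ln Z_i$, the same boundary bookkeeping (\eqref{eq:compat} on $\Gamma_i$, no-flux elsewhere), the same integration by parts turning $\int_\Omega\partial_t\rho\,(\Phi-\Phi^*)\,dx$ into $\frac{\eps}{2}\frac{d}{dt}\int_\Omega|\nabla(\Phi-\Phi^*)|^2dx$, and the same cancellation of $\int_\Omega\rho\,u\cdot\nabla\Phi\,dx$ against the Navier--Stokes energy identity. The differences are cosmetic: you differentiate each energy piece instead of multiplying \eqref{eq:NP} by the combined test function, and you leave tacit the step $\ln Z_i\int_\Omega u\cdot\nabla c_i\,dx=0$, which holds since $\dv u=0$ and $u=0$ on $\partial\Omega$.
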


\begin{proof}
We multiply \eqref{eq:NP} by
\[
\ln\frac{c_i}{c_i^*}+z_i(\Phi-\Phi^*) = \ln c_i+z_i\Phi-\ln Z_i,
\]
sum over all indices $i \in \left\{1, \dots, N \right\}$, and integrate over $\Omega$. Since $\nabla c_i+z_ic_i\nabla\Phi=c_i\nabla(\ln c_i+z_i\Phi)$, the diffusion terms give
\begin{align*}
&\sum_{i=1}^ND_i\int_\Omega\dv(\nabla c_i+z_ic_i\nabla\Phi)(\ln c_i+z_i\Phi-\ln Z_i)\,dx\\
&= -\Diss + \sum_{i=1}^ND_i\int_{\partial\Omega}(\partial_nc_i+z_ic_i\partial_n\Phi)(\ln c_i+z_i\Phi-\ln Z_i)\,dS.
\end{align*}
The boundary integral vanishes. Indeed, on $\partial\Omega\setminus\Gamma_i$ the first factor vanishes by \eqref{eq:bc}, and on $\Gamma_i\subset\Sigma_D$ we have $\ln c_i+z_i\Phi-\ln Z_i=\ln\gamma_i+z_iW-\ln Z_i=0$ by \eqref{eq:compat}.

On the left-hand side of \eqref{eq:NP}, we first consider the time derivatives. Since $c_i^*$ does not depend on time, $\partial_t(c_i\ln(c_i/c_i^*)-c_i+c_i^*)=\partial_tc_i\ln(c_i/c_i^*)$, and therefore
\[
\sum_{i=1}^N\int_\Omega\partial_tc_i\Big(\ln\frac{c_i}{c_i^*}+z_i(\Phi-\Phi^*)\Big)dx = \frac{d}{dt}\Hst + \int_\Omega(\partial_t\rho)(\Phi-\Phi^*)\,dx.
\]
Using \eqref{eq:Poisson} and integrating by parts, we have
\begin{align*}
\int_\Omega(\partial_t\rho)(\Phi-\Phi^*)\,dx &= -\eps\int_\Omega(\partial_t\Delta\Phi)(\Phi-\Phi^*)\,dx\\
&= \eps\int_\Omega(\partial_t\nabla\Phi)\cdot\nabla(\Phi-\Phi^*)\,dx - \eps\int_{\partial\Omega}(\partial_t\partial_n\Phi)(\Phi-\Phi^*)\,dS\\
&= \frac{\eps}{2}\frac{d}{dt}\int_\Omega|\nabla(\Phi-\Phi^*)|^2dx,
\end{align*}
where the boundary integral vanishes because $\Phi-\Phi^*=0$ on $\Sigma_D$ and $\partial_t\partial_n\Phi=0$ on $\Sigma_N$, and where we used that $\Phi^*$ does not depend on time. Next we consider the transport terms. Since $\dv u=0$ and $u=0$ on $\partial\Omega$, the integrals $\int_\Omega u\cdot\nabla(c_i\ln c_i-c_i)\,dx$ and $\int_\Omega u\cdot\nabla c_i\,dx$ vanish, and
\begin{align*}
&\sum_{i=1}^N\int_\Omega(u\cdot\nabla c_i)(\ln c_i+z_i\Phi-\ln Z_i)\,dx \\&= \sum_{i=1}^N\int_\Omega u\cdot\nabla(c_i\ln c_i-c_i)\,dx + \int_\Omega(u\cdot\nabla\rho)\Phi\,dx
- \sum_{i=1}^N\ln Z_i\int_\Omega u\cdot\nabla c_i\,dx\\
&= -\int_\Omega\rho\,u\cdot\nabla\Phi\,dx.
\end{align*}
Putting these computations together, we obtain
\begin{equation}\label{eq:energyNP}
\begin{aligned}
&\frac{d}{dt}\Big[\Hst + \Pot\Big] \\&\quad\quad+ \Diss 
= \int_\Omega\rho\,u\cdot\nabla\Phi\,dx.
\end{aligned}
\end{equation}
On the other hand, taking the $L^2$ inner product of \eqref{eq:NS} with $u$, and using $\int_\Omega(u\cdot\nabla u)\cdot u\,dx=0$ and $\int_\Omega\nabla p\cdot u\,dx=0$, we have
\[
\frac12\frac{d}{dt}\int_\Omega|u|^2dx + \nu\int_\Omega|\nabla u|^2dx = -K\int_\Omega\rho\,u\cdot\nabla\Phi\,dx.
\]
Dividing this identity by $K$ and adding it to \eqref{eq:energyNP}, the coupling terms cancel, and we obtain \eqref{eq:energy}.
\end{proof}

At each time $t$, we compare the solution with the Boltzmann state generated by its own potential. We define
\begin{equation}\label{eq:inst}
\tilde c_i(t) = \tilde Z_i(t)\,e^{-z_i\Phi(t)}, \qquad \tilde Z_i = Z_i \ \text{ if } \Gamma_i\neq\emptyset, \qquad \tilde Z_i(t) = \frac{\int_\Omega c_i(0)\,dx}{\int_\Omega e^{-z_i\Phi(t)}dx} \ \text{ if } \Gamma_i=\emptyset.
\end{equation}
By \eqref{eq:mass}, a blocking species satisfies $\int_\Omega\tilde c_i\,dx=\int_\Omega c_i\,dx=\int_\Omega c_i^*\,dx=\int_\Omega c_i(0)\,dx$. If $\Gamma_i\neq\emptyset$, then on $\Gamma_i\subset\Sigma_D$ we have
\begin{equation}\label{eq:inst-bc}
c_ie^{z_i\Phi} = \gamma_ie^{z_iW} = Z_i.
\end{equation}
Moreover, letting $f_i=c_ie^{z_i\Phi}$, we have $\nabla\ln f_i=\nabla(\ln c_i+z_i\Phi)$, and therefore
\begin{equation}\label{eq:fisher}
c_i|\nabla(\ln c_i+z_i\Phi)|^2 = c_i\frac{|\nabla f_i|^2}{f_i^2} = e^{-z_i\Phi}\frac{|\nabla f_i|^2}{f_i} = 4\,e^{-z_i\Phi}\Big|\nabla\sqrt{c_i\,e^{z_i\Phi}}\Big|^2.
\end{equation}
Thus the dissipation is a weighted Fisher information, with weight $e^{-z_i\Phi(t)}$.

\begin{lemma}\label{lem:compare}
For all $t\ge0$,
\[
\sum_{i=1}^N\int_\Omega\Big(c_i\ln\frac{c_i}{\tilde c_i}-c_i+\tilde c_i\Big)dx \ge \Hst + \eps\int_\Omega|\nabla(\Phi-\Phi^*)|^2dx.
\]
\end{lemma}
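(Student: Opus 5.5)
The plan is to write the left-hand side as the relative entropy with respect to $c_i^*$ plus a remainder, and then show that the remainder dominates $\eps\int_\Omega|\nabla(\Phi-\Phi^*)|^2dx$. The remainder will split into an electrostatic part, handled by integration by parts using \eqref{eq:Poisson} and \eqref{eq:PB}, and species-by-species terms, handled by convexity of the exponential.

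\textbf{Step 1: algebraic splitting.} Since $\ln c_i^*-\ln\tilde c_i=\ln(Z_i/\tilde Z_i)+z_i(\Phi-\Phi^*)$, I write $c_i\ln(c_i/\tilde c_i)=c_i\ln(c_i/c_i^*)+c_i\ln(c_i^*/\tilde c_i)$. Summing over $i$ gives
\[
\text{LHS}=\Hst+\sum_{i=1}^N\int_\Omega\Big(c_i\ln\frac{Z_i}{\tilde Z_i}+\tilde c_i-c_i^*\Big)dx+\int_\Omega\rho(\Phi-\Phi^*)\,dx .
\]

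\textbf{Step 2: the electrostatic term.} I subtract \eqref{eq:PB} from \eqref{eq:Poisson}, test against $\Phi-\Phi^*$, and integrate by parts. The boundary terms vanish because $\Phi-\Phi^*=0$ on $\Sigma_D$ and $\partial_n\Phi=\partial_n\Phi^*=0$ on $\Sigma_N$. This gives
\[
\int_\Omega\rho(\Phi-\Phi^*)\,dx=\eps\int_\Omega|\nabla(\Phi-\Phi^*)|^2dx+\sum_{i=1}^N z_i\int_\Omega c_i^*(\Phi-\Phi^*)\,dx .
\]
It therefore suffices to show that, for each $i$,
\[
T_i:=\int_\Omega\Big(c_i\ln\frac{Z_i}{\tilde Z_i}+\tilde c_i-c_i^*+z_ic_i^*(\Phi-\Phi^*)\Big)dx\ge0 .
\]

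\textbf{Step 3: the species terms.} I treat the two kinds of species separately.
\begin{itemize}
\item If $\Gamma_i\neq\emptyset$, then $\tilde Z_i=Z_i$ and $\tilde c_i=c_i^*e^{-z_i(\Phi-\Phi^*)}$. Hence
\[
T_i=\int_\Omega c_i^*\big(e^{-z_i(\Phi-\Phi^*)}-1+z_i(\Phi-\Phi^*)\big)\,dx\ge0,
\]
by $e^x\ge1+x$.
\item If $\Gamma_i=\emptyset$, let $m_i=\int_\Omega c_i(0)\,dx$. By \eqref{eq:mass} and the remark after \eqref{eq:inst}, $\int_\Omega c_i=\int_\Omega\tilde c_i=\int_\Omega c_i^*=m_i$. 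So the terms $\tilde c_i-c_i^*$ integrate to zero, and $T_i=m_i\ln(Z_i/\tilde Z_i)+z_i\int_\Omega c_i^*(\Phi-\Phi^*)\,dx$. From \eqref{eq:Boltz} and \eqref{eq:inst},
\[
\frac{Z_i}{\tilde Z_i}=\frac{\int_\Omega e^{-z_i\Phi}dx}{\int_\Omega e^{-z_i\Phi^*}dx}=\int_\Omega\frac{c_i^*}{m_i}\,e^{-z_i(\Phi-\Phi^*)}\,dx .
\]
Jensen's inequality for the probability measure $c_i^*dx/m_i$ and the convex function $\exp$ gives $\ln(Z_i/\tilde Z_i)\ge-\frac{z_i}{m_i}\int_\Omega c_i^*(\Phi-\Phi^*)\,dx$, hence $T_i\ge0$.
\end{itemize}

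\textbf{Expected obstacle.} The delicate case is the blocking species. Its constants $Z_i$ and $\tilde Z_i$ differ, and its nonnegativity relies on exact mass conservation together with Jensen's inequality rather than a pointwise bound. The other care point is checking that the boundary terms in the integration by parts of Step 2 vanish under the mixed conditions \eqref{eq:bc}.
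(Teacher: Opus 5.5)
Your proof is correct and follows essentially the same route as the paper. It uses the same splitting via $\ln(c_i^*/\tilde c_i)=\ln(Z_i/\tilde Z_i)+z_i(\Phi-\Phi^*)$, the inequality $e^{x}\ge 1+x$ for reservoir species, Jensen's inequality with mass conservation for blocking species, and the identity $\int_\Omega(\rho-\rho^*)(\Phi-\Phi^*)\,dx=\eps\int_\Omega|\nabla(\Phi-\Phi^*)|^2dx$. The only difference is the order: you integrate by parts first and reduce to $T_i\ge0$, whereas the paper bounds each species' excess below by $z_i\int_\Omega(c_i-c_i^*)(\Phi-\Phi^*)\,dx$ and integrates by parts at the end.
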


\begin{proof}
Since $\ln(c_i^*/\tilde c_i)=\ln(Z_i/\tilde Z_i)+z_i(\Phi-\Phi^*)$, we have for each $i$
\begin{multline*}
\int_\Omega\Big(c_i\ln\frac{c_i}{\tilde c_i}-c_i+\tilde c_i\Big)dx - \int_\Omega\Big(c_i\ln\frac{c_i}{c_i^*}-c_i+c_i^*\Big)dx\\
= z_i\int_\Omega c_i(\Phi-\Phi^*)\,dx + \ln\frac{Z_i}{\tilde Z_i}\int_\Omega c_i\,dx + \int_\Omega(\tilde c_i-c_i^*)\,dx.
\end{multline*}
Suppose first that $\Gamma_i\neq\emptyset$. Then $\tilde Z_i=Z_i$, the second term vanishes, and $\tilde c_i-c_i^*=c_i^*(e^{-z_i(\Phi-\Phi^*)}-1)\ge-z_ic_i^*(\Phi-\Phi^*)$ by the elementary inequality $e^{-x}-1\ge-x$. Hence the right-hand side is bounded from below by $z_i\int_\Omega(c_i-c_i^*)(\Phi-\Phi^*)\,dx$.

Suppose now that $\Gamma_i=\emptyset$, and let $N_i=\int_\Omega c_i(0)\,dx$. Then $\int_\Omega c_i\,dx=\int_\Omega\tilde c_i\,dx=\int_\Omega c_i^*\,dx=N_i$, so the third term vanishes. Since $c_i^*\,dx/N_i$ is a probability measure and $\ln$ is concave, Jensen's inequality gives
\[
\ln\frac{Z_i}{\tilde Z_i} = \ln\Big(\frac{\int_\Omega e^{-z_i\Phi}dx}{\int_\Omega e^{-z_i\Phi^*}dx}\Big) = \ln\Big(\int_\Omega e^{-z_i(\Phi-\Phi^*)}\frac{c_i^*}{N_i}\,dx\Big) \ge -\frac{z_i}{N_i}\int_\Omega c_i^*(\Phi-\Phi^*)\,dx.
\]
Multiplying by $N_i$, we see that the right-hand side is again bounded from below by $z_i\int_\Omega(c_i-c_i^*)(\Phi-\Phi^*)\,dx$.

Summing in $i$, and using \eqref{eq:Poisson}, \eqref{eq:PB}, $\Phi-\Phi^*=0$ on $\Sigma_D$ and $\partial_n(\Phi-\Phi^*)=0$ on $\Sigma_N$, we obtain
\begin{align*}
\sum_{i=1}^Nz_i\int_\Omega(c_i-c_i^*)(\Phi-\Phi^*)\,dx &= \int_\Omega(\rho-\rho^*)(\Phi-\Phi^*)\,dx = -\eps\int_\Omega\Delta(\Phi-\Phi^*)(\Phi-\Phi^*)\,dx\\
&= \eps\int_\Omega|\nabla(\Phi-\Phi^*)|^2dx - \eps\int_{\partial\Omega}\partial_n(\Phi-\Phi^*)(\Phi-\Phi^*)\,dS\\
&= \eps\int_\Omega|\nabla(\Phi-\Phi^*)|^2dx,
\end{align*}
which ends the proof.
\end{proof}

\subsection{Exponential decay}

We first prove exponential decay under a pointwise bound on the potential. Since the system is autonomous, we state it from an arbitrary initial time $t_0$.

\begin{proposition}\label{prop:decayM}
Let $(u,c_1,\dots,c_N,\Phi)$ be a global classical solution of \eqref{eq:NP}--\eqref{eq:NS} with boundary conditions \eqref{eq:bc} and positive concentrations. Let $t_0\ge0$ be a nonnegative time. Suppose that
\begin{equation}\label{eq:Mbound}
\|\Phi(t)\|_{L^\infty} \le M 
\end{equation} for all $t \ge t_0$. 
Let $B>0$ be such that, for every $i$ with $\Gamma_i\neq\emptyset$,
\begin{equation}\label{eq:Bbound}
\begin{gathered}
\frac{1}{Z_i}\int_\Omega c_i^*\,dx \le B,\\
\frac{1}{Z_i}\Big[\HstAt{t_0} + \PotAt{t_0} + \KinAt{t_0}\Big] \le B.
\end{gathered}
\end{equation}
Then there exists a constant $r>0$ depending only on $d$, $\Omega$, $\Gamma_1,\dots,\Gamma_N$, $D_1,\dots,D_N$, $z_1,\dots,z_N$, $\nu$, $M$ and $B$, such that for all $t\ge t_0$,
\begin{multline}\label{eq:decayM}
\Hst + \Pot + \Kin\\
\le \Big[\HstAt{t_0} + \PotAt{t_0} + \KinAt{t_0}\Big]e^{-r(t-t_0)}.
\end{multline}
Moreover, for each $i \in \left\{1, \dots, N\right\}$ and all $t\ge t_0$,
\begin{multline}\label{eq:L1M}
\|c_i(t)-c_i^*\|_{L^1}^2\\
\le C\Big[\HstAt{t_0} + \PotAt{t_0} + \KinAt{t_0}\Big]e^{-r(t-t_0)},
\end{multline}
where $C$ depends only on upper bounds for $\int_\Omega c_i^*\,dx$ and for the quantity in brackets.
\end{proposition}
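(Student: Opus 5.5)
The plan is to show that the dissipation in Proposition \ref{prop:energy} controls the free energy linearly, and then to close with Gronwall. Let $E(t)$ denote the bracketed free energy (entropy $+$ potential $+$ kinetic) at time $t$. By \eqref{eq:energy}, $E$ is nonincreasing, so $E(t)\le E(t_0)$ for all $t\ge t_0$. By \eqref{eq:fisher}, the dissipation equals
\[
\sum_i 4D_i\int_\Omega e^{-z_i\Phi}\big|\nabla\sqrt{c_ie^{z_i\Phi}}\big|^2dx+\frac{\nu}{K}\int_\Omega|\nabla u|^2dx .
\]
Fix $t\ge t_0$ and apply Theorem \ref{thm:wLSI} species by species. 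We take $V=z_i\Phi(t)$, $w=e^{-z_i\Phi(t)}$ and $g=c_i$. By \eqref{eq:Mbound}, $\osc V\le 2|z_i|M$.

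\emph{Reservoir species ($\Gamma_i\neq\emptyset$).} By \eqref{eq:inst-bc}, $g/w=Z_i$ on $\Gamma_i$, so \eqref{eq:wdirLSI} with $\Gamma=\Gamma_i$ applies. Its left-hand side is exactly $\int_\Omega(c_i\ln(c_i/\tilde c_i)-c_i+\tilde c_i)\,dx$. We must bound the prefactor $1+\int_\Omega c_i\,dx/(Z_i\int_\Omega w\,dx)$ uniformly in time. We have $\int_\Omega w\,dx\ge |\Omega|e^{-|z_i|M}$. For the mass, use \eqref{eq:elem3} with $a=c_i/c_i^*$: on the set $\{c_i\ge e^2c_i^*\}$ the entropy density dominates $c_i$. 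Hence
\[
\int_\Omega c_i\,dx\le e^2\int_\Omega c_i^*\,dx+E(t)\le e^2\int_\Omega c_i^*\,dx+E(t_0)\le (e^2+1)BZ_i
\]
by \eqref{eq:Bbound}. The prefactor is therefore bounded by a constant depending only on $d,\Omega,\Gamma_i,z_i,M,B$.

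\emph{Blocking species ($\Gamma_i=\emptyset$).} Here $\int_\Omega c_i\,dx=\int_\Omega\tilde c_i\,dx$, so the term $-c_i+\tilde c_i$ integrates to zero. The remaining term $\int_\Omega c_i\ln(c_i/\tilde c_i)\,dx$ is precisely the left side of \eqref{eq:wLSI}, with constant $Ce^{2|z_i|M}$ and no mass factor at all.

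Summing over $i$ and invoking Lemma \ref{lem:compare} gives
\[
\sum_i\int_\Omega\Big(c_i\ln\frac{c_i}{c_i^*}-c_i+c_i^*\Big)dx+2\cdot\frac{\eps}{2}\int_\Omega|\nabla(\Phi-\Phi^*)|^2dx\le C_1\sum_i D_i\int_\Omega c_i|\nabla(\ln c_i+z_i\Phi)|^2dx ,
\]
where $C_1$ depends on $\min D_i$ and the constants above, but not on $\eps$. This is the key point: the potential energy term is absorbed through the convexity comparison, not through any elliptic estimate involving $\eps$. For the velocity, $u=0$ on $\partial\Omega$, so the Poincaré inequality gives $\frac{1}{2K}\int_\Omega|u|^2\le C_\Omega\frac{1}{\nu}\cdot\frac{\nu}{K}\int_\Omega|\nabla u|^2$. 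Combining, the dissipation is at least $rE(t)$, with $r$ depending only on the listed parameters. Then \eqref{eq:energy} gives $E'\le -rE$, and \eqref{eq:decayM} follows by Gronwall on $[t_0,t]$.

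For \eqref{eq:L1M}, write $a=c_i/c_i^*$, so that $|c_i-c_i^*|=c_i^*|a-1|$. By \eqref{eq:elem2} and Cauchy--Schwarz,
\[
\|c_i-c_i^*\|_{L^1}^2\le \frac13\int_\Omega c_i^*(2a+4)\,dx\int_\Omega c_i^*(a\ln a-a+1)\,dx=\frac{2\int_\Omega c_i\,dx+4\int_\Omega c_i^*\,dx}{3}\cdot\int_\Omega\Big(c_i\ln\frac{c_i}{c_i^*}-c_i+c_i^*\Big)dx .
\]
The last factor is at most $E(t)$, which decays by \eqref{eq:decayM}. The mass $\int_\Omega c_i\,dx$ is bounded by $e^2\int_\Omega c_i^*\,dx+E(t_0)$ as above; for blocking species it is conserved and equal to $\int_\Omega c_i^*\,dx$. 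The main obstacle is the uniform control of the factor $1+\int_\Omega c_i/(Z_i\int_\Omega w)$ in \eqref{eq:wdirLSI}. This is handled by the monotonicity of $E$ together with \eqref{eq:elem3} and the normalization by $Z_i$ in \eqref{eq:Bbound}.
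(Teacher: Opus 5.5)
Your proposal is correct and follows essentially the same route as the paper. The steps match: monotonicity of the free energy; Theorem \ref{thm:wLSI} applied species by species with weight $e^{-z_i\Phi(t)}$, using \eqref{eq:wdirLSI} for reservoir species and \eqref{eq:wLSI} for blocking ones; Lemma \ref{lem:compare} to pass from the instantaneous Boltzmann state to $(c_i^*,\Phi^*)$; Poincar\'e for $u$; Gr\"onwall; and \eqref{eq:elem2} with Cauchy--Schwarz for the $L^1$ bound. The only difference is minor and lies in the mass bound. You split pointwise on $\{c_i\ge e^2c_i^*\}$ to get $\int_\Omega c_i\,dx\le e^2\int_\Omega c_i^*\,dx+E(t_0)$, while the paper applies Jensen's inequality to the averaged ratio $\int_\Omega c_i\,dx/\int_\Omega c_i^*\,dx$ and obtains the maximum of these two quantities; either suffices.
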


\begin{proof}
By Proposition \ref{prop:energy}, the energy
\[
\Hst + \Pot + \Kin
\]
is nonincreasing in time. We bound the masses first. By Lemma \ref{lem:elem}, $a\mapsto a\ln a-a+1$ is convex, so Jensen's inequality for the probability measure $c_i^*\,dx/\int_\Omega c_i^*\,dx$ gives
\[
\int_\Omega\Big(c_i\ln\frac{c_i}{c_i^*}-c_i+c_i^*\Big)dx = \int_\Omega c_i^*\Big(\frac{c_i}{c_i^*}\ln\frac{c_i}{c_i^*}-\frac{c_i}{c_i^*}+1\Big)dx \ge \int_\Omega c_i^*\,dx\,\big(a_i\ln a_i-a_i+1\big),
\]
where $a_i=\int_\Omega c_i\,dx/\int_\Omega c_i^*\,dx$. If $a_i(t)\ge e^2$, then \eqref{eq:elem3} shows that the right-hand side is at least $\int_\Omega c_i(t)\,dx$. Since the left-hand side is bounded by the value of the nonincreasing quantity above at time $t_0$, we conclude that, for all $t\ge t_0$,
\begin{multline}\label{eq:massbound}
\int_\Omega c_i(t)\,dx \le \max\Big\{e^2\int_\Omega c_i^*\,dx,\\
\HstAt{t_0} + \PotAt{t_0} + \KinAt{t_0}\Big\}.
\end{multline}
In particular, if $\Gamma_i\neq\emptyset$, then $\int_\Omega c_i(t)\,dx\le e^2Z_iB$ by \eqref{eq:Bbound}.

Next we fix $t\ge t_0$ and $i \in \left\{1, \dots, N \right\}$, and apply Theorem \ref{thm:wLSI} with $g=c_i(t)$, $V=z_i\Phi(t)$ and $w=e^{-z_i\Phi(t)}$. By \eqref{eq:Mbound}, $\osc V\le2|z_i|M$ and $\int_\Omega e^{-z_i\Phi}dx\ge|\Omega|e^{-|z_i|M}$. If $\Gamma_i=\emptyset$, the left-hand side of \eqref{eq:wLSI} is
\[
\int_\Omega c_i\ln\Big(c_ie^{z_i\Phi}\frac{\int_\Omega e^{-z_i\Phi}dx}{\int_\Omega c_i\,dx}\Big)dx = \int_\Omega c_i\ln\frac{c_i}{\tilde c_i}\,dx = \int_\Omega\Big(c_i\ln\frac{c_i}{\tilde c_i}-c_i+\tilde c_i\Big)dx,
\]
because $c_i$ and $\tilde c_i$ have the same mass. If $\Gamma_i\neq\emptyset$, then ${c_i}/w=Z_i$ on $\Gamma_i$ by \eqref{eq:inst-bc}. The left-hand side of \eqref{eq:wdirLSI} with $g= c_i$, $w = e^{-z_i\Phi}$ and $Z=Z_i$ is $\int_\Omega(c_i\ln(c_i/\tilde c_i)-c_i+\tilde c_i)\,dx$, and
\[
\frac{\int_\Omega c_i\,dx}{Z_i\int_\Omega e^{-z_i\Phi}dx} \le \frac{e^2Z_iB}{Z_i|\Omega|e^{-|z_i|M}} = \frac{e^{2+|z_i|M}B}{|\Omega|}.
\]
In both cases, \eqref{eq:fisher} and Theorem \ref{thm:wLSI} yield
\[
D_i\int_\Omega c_i|\nabla(\ln c_i+z_i\Phi)|^2dx = 4D_i\int_\Omega e^{-z_i\Phi}\Big|\nabla\sqrt{c_ie^{z_i\Phi}}\Big|^2dx \ge \frac{4D_i}{\Lambda_i}\int_\Omega\Big(c_i\ln\frac{c_i}{\tilde c_i}-c_i+\tilde c_i\Big)dx,
\]
where $\Lambda_i=C_0e^{2|z_i|M}$ if $\Gamma_i=\emptyset$ and $\Lambda_i=C_0e^{2|z_i|M}(1+e^{2+|z_i|M}B/|\Omega|)$ if $\Gamma_i\neq\emptyset$, and $C_0$ is the maximum of the constants of Theorem \ref{thm:wLSI} with $\Gamma=\Gamma_i$, $i=1,\dots,N$. Summing in $i$ and using Lemma \ref{lem:compare}, we obtain
\begin{align*}
\Diss &\ge \min_{1\le i\le N}\frac{4D_i}{\Lambda_i}\sum_{i=1}^N\int_\Omega\Big(c_i\ln\frac{c_i}{\tilde c_i}-c_i+\tilde c_i\Big)dx\\
&\ge \min_{1\le i\le N}\frac{4D_i}{\Lambda_i}\Big[\Hst + \Pot\Big],
\end{align*}
where in the last step we only used half of the term $\eps\int_\Omega|\nabla(\Phi-\Phi^*)|^2dx$ provided by Lemma \ref{lem:compare}. Moreover, by the Poincar\'e inequality $\int_\Omega|u|^2dx\le C_1\int_\Omega|\nabla u|^2dx$, where $C_1$ depends only on $\Omega$,
\[
\frac{\nu}{K}\int_\Omega|\nabla u|^2dx \ge \frac{2\nu}{C_1}\cdot\frac{1}{2K}\int_\Omega|u|^2dx.
\]
We let $r=\min\{\min_i4D_i/\Lambda_i,\,2\nu/C_1\}$. Inserting these bounds in \eqref{eq:energy}, we infer that
\begin{multline*}
\frac{d}{dt}\Big[\Hst + \Pot + \Kin\Big]\\
+ r\Big[\Hst + \Pot + \Kin\Big] \le 0
\end{multline*}
for $t\ge t_0$, and \eqref{eq:decayM} follows from Gr\"onwall's inequality.

Finally, by \eqref{eq:elem2} with $a=c_i/c_i^*$ and the Cauchy--Schwarz inequality with respect to the measure $c_i^*\,dx$,
\begin{align*}
\|c_i-c_i^*\|_{L^1} = \int_\Omega c_i^*\Big|\frac{c_i}{c_i^*}-1\Big|dx &\le \Big(\int_\Omega c_i^*\,\frac{2c_i/c_i^*+4}{3}\,dx\Big)^{1/2}\Big(\int_\Omega\Big(c_i\ln\frac{c_i}{c_i^*}-c_i+c_i^*\Big)dx\Big)^{1/2}\\
&= \Big(\frac23\big(\|c_i\|_{L^1}+2\|c_i^*\|_{L^1}\big)\Big)^{1/2}\Big(\int_\Omega\Big(c_i\ln\frac{c_i}{c_i^*}-c_i+c_i^*\Big)dx\Big)^{1/2},
\end{align*}
which, together with \eqref{eq:massbound} and \eqref{eq:decayM}, gives \eqref{eq:L1M}.
\end{proof}

The rate $r$ in Proposition \ref{prop:decayM} depends on $M$, and all known uniform-in-time bounds on $\|\Phi(t)\|_{L^\infty}$ depend on negative powers of $\eps$ (see \cite[Remark 2]{abdo2025logarithmic}). We now show that the asymptotic rate is nevertheless independent of $\eps$. For this purpose we assume:
\begin{enumerate}
\item[(U)] at least one species with positive valence and at least one species with negative valence are in contact with a reservoir: there exist $i,j\in\{1,\dots,N\}$ with $\Gamma_i\neq\emptyset$, $\Gamma_j\neq\emptyset$, $z_i>0$ and $z_j<0$.
\end{enumerate}
The remaining species may be blocking. Under (U), the constants $Z_i$ of the species in contact with a reservoir are determined by the boundary data through \eqref{eq:compat}, and the masses $N_i=\int_\Omega c_i(0)\,dx$ of the blocking species are conserved by \eqref{eq:mass}; none of these quantities depends on $\eps$. The function $F(\phi)=\sum_iz_iZ_ie^{-z_i\phi}$, with $Z_i$ given by \eqref{eq:Boltz}, is strictly decreasing and has a unique zero $\Phi_b$ (see Appendix \ref{sec:elliptic}), and $\Phi^*$ satisfies \eqref{eq:weakPB}. We let $M_*$ be given by \eqref{eq:Mstar} if $\Gamma_i\neq\emptyset$ for all $i$, and by \eqref{eq:Mstarblock} otherwise. Then Proposition \ref{prop:PB} (if all species are in contact with a reservoir) or Proposition \ref{prop:PBblock} (otherwise) gives
\begin{equation}\label{eq:cstarbounds}
\|\Phi^*\|_{L^\infty} \le M_*, \qquad Z_ie^{-|z_i|M_*} \le c_i^* \le Z_ie^{|z_i|M_*}, \qquad Z_i\le\bar Z_i,
\end{equation}
where $\bar Z_i=Z_i$ if $\Gamma_i\neq\emptyset$ and $\bar Z_i=N_ie^{|z_i|M_*}/|\Omega|$ if $\Gamma_i=\emptyset$. The constants $M_*$ and $\bar Z_i$ are independent of $\eps$.

\begin{theorem}\label{thm:main}
Let $(u,c_1,\dots,c_N,\Phi)$ be a global classical solution of \eqref{eq:NP}--\eqref{eq:NS} with boundary conditions \eqref{eq:bc} and positive concentrations. Suppose that the condition (U) holds, and that for some $p>d/2$,
\begin{equation}\label{eq:Lp}
R := \sup_{t\ge0}\ \max_{1\le i\le N}\|c_i(t)\|_{L^p} < \infty,
\end{equation}
where $R$ is allowed to depend on $\eps$. Then there exist a constant $r_*>0$ depending only on $d$, $\Omega$, $\Gamma_1,\dots,\Gamma_N$, $D_1,\dots,D_N$, $z_1,\dots,z_N$, $\nu$ and $M_*$, and a time $T_\eps\ge0$, such that for all $t\ge T_\eps$,
\begin{multline}\label{eq:main}
\Hst + \Pot + \Kin\\
\le \Big[\HstAt{T_\eps} + \PotAt{T_\eps} + \KinAt{T_\eps}\Big]e^{-r_*(t-T_\eps)},
\end{multline}
and, for each $i \in \left\{1, \dots, N \right\}$,
\begin{multline}\label{eq:mainL1}
\|c_i(t)-c_i^*\|_{L^1}^2\\
\le C\Big[\HstAt{T_\eps} + \PotAt{T_\eps} + \KinAt{T_\eps}\Big]e^{-r_*(t-T_\eps)},
\end{multline}
where $C$ depends only on $\Omega$, $z_i$, $\bar Z_i$ and $M_*$. 
\end{theorem}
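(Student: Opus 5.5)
The plan is to reduce Theorem \ref{thm:main} to Proposition \ref{prop:decayM}. We will show that after some time $T_\eps$ the potential obeys $\|\Phi(t)\|_{L^\infty}\le M:=M_*+1$ (or $2M_*$). We will also choose $B$ in \eqref{eq:Bbound} depending only on the admissible data. The key observation is that $\Phi(t)-\Phi^*$ solves the mixed problem
\[
-\eps\Delta(\Phi-\Phi^*)=\rho-\rho^*,\qquad \Phi-\Phi^*=0 \text{ on } \Sigma_D,\qquad \partial_n(\Phi-\Phi^*)=0 \text{ on } \Sigma_N.
\]
The $L^\infty$ estimate for the mixed Dirichlet--Neumann problem from Appendix \ref{sec:elliptic} will be used in an interpolated form. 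It gives $\|\Phi-\Phi^*\|_{L^\infty}\le C\eps^{-1}\|\rho-\rho^*\|_{L^q}$ for some $q>d/2$. Interpolating $L^q$ between $L^1$ and $L^p$ with $d/2<q<p$, we get
\[
\|\rho-\rho^*\|_{L^q}\le C\,\|\rho-\rho^*\|_{L^1}^{\theta}\big(R+\max_i\|c_i^*\|_{L^p}\big)^{1-\theta},
\]
where $\|c_i^*\|_{L^p}$ is controlled through \eqref{eq:cstarbounds}. Hence $\|\Phi(t)-\Phi^*\|_{L^\infty}\le C(\eps,R)\max_i\|c_i(t)-c_i^*\|_{L^1}^\theta$. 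It therefore suffices to show that $c_i(t)\to c_i^*$ in $L^1$ as $t\to\infty$. The rate of this convergence does not matter, so it may depend on $\eps$.

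The main obstacle is qualitative convergence with no a priori $L^\infty$ bound on $\Phi$. We obtain it by first applying Proposition \ref{prop:decayM} with an $\eps$-dependent $M$. The same elliptic estimate with $\|\rho\|_{L^p}\le CR$ and the boundary data $W$ gives a uniform bound
\[
\|\Phi(t)\|_{L^\infty}\le M_\eps := C(\|W\|_{L^\infty}+\eps^{-1}R)\qquad\text{for all } t\ge0.
\]
The mixed estimate handles the inhomogeneous Dirichlet data after subtracting an extension of $W$. The constant $B_0$ for $t_0=0$ is finite, since $Z_i$ is fixed by the boundary data and the initial energy is finite. Proposition \ref{prop:decayM} then yields exponential decay at some $\eps$-dependent rate $r_\eps$, and by \eqref{eq:L1M} also $\|c_i(t)-c_i^*\|_{L^1}\to0$. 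Combined with the interpolation above, this gives a time $T_\eps$ such that $\|\Phi(t)-\Phi^*\|_{L^\infty}\le1$ for all $t\ge T_\eps$. Hence $\|\Phi(t)\|_{L^\infty}\le M_*+1$ for $t\ge T_\eps$.

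We now reapply Proposition \ref{prop:decayM} with $t_0=T_\eps$, $M=M_*+1$, and a $B$ independent of $\eps$. The first condition in \eqref{eq:Bbound} holds with $B\ge e^{|z_i|M_*}|\Omega|$ by \eqref{eq:cstarbounds}. For the second condition, the energy is nonincreasing and tends to zero, so enlarging $T_\eps$ makes the energy at $T_\eps$ at most $\min_{\Gamma_i\neq\emptyset}Z_i$. We may thus take $B=\max\{1,e^{|z_i|M_*}|\Omega|\}$. The $Z_i$ with $\Gamma_i\ne\emptyset$ are determined by the boundary data, which we regard as encoded in $M_*$ and $\Gamma_i$, as in the statement. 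The resulting rate $r_*$ depends only on $d,\Omega,\Gamma_i,D_i,z_i,\nu,M_*$, which gives \eqref{eq:main}. Finally, \eqref{eq:mainL1} follows from \eqref{eq:L1M}. Its constant depends on bounds for $\int c_i^*\le\bar Z_ie^{|z_i|M_*}|\Omega|$ and for the energy at $T_\eps$, which is at most $\min Z_i\le\bar Z_i$; so it depends only on $\Omega,z_i,\bar Z_i,M_*$.
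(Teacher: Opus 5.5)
Your proposal is correct and follows the paper's proof essentially step by step. You apply Proposition \ref{prop:decayM} once with an $\eps$-dependent $M$ to get $L^1$ convergence, combine H\"older interpolation with Proposition \ref{prop:Linfty} to get $\|\Phi(t)-\Phi^*\|_{L^\infty}\to0$, and reapply the proposition from $T_\eps$ with $M=M_*+1$ and an $\eps$-free $B$. The paper gets the first $\eps$-dependent bound more simply by applying Proposition \ref{prop:Linfty} to $\Phi-\Phi^*$, so no extension of $W$ is needed.

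There is one bookkeeping slip. For a blocking species $i$, the inequality $\min_{\Gamma_j\neq\emptyset}Z_j\le\bar Z_i$ can fail, so your threshold does not give the stated dependence of $C$ in \eqref{eq:mainL1}. Instead, require the energy at $T_\eps$ to be at most $\min_j Z_j$ over all species, using $Z_i\le\bar Z_i$ from \eqref{eq:cstarbounds}, or at most $e^2\min_j\int_\Omega c_j^*\,dx$ as the paper does.
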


\begin{proof} The proof is divided into several steps.

{\bf{Step 1: a uniform bound on the potential.}} Let $\psi=\Phi-\Phi^*$. By \eqref{eq:Poisson} and \eqref{eq:PB}, for every nonnegative time $t$, the function $\psi(t)$ belongs to $H^1_{\Sigma_D}(\Omega)$ and satisfies \eqref{eq:weakmixed} with $f=\rho(t)-\rho^*$. By \eqref{eq:Lp} and \eqref{eq:cstarbounds},
\[
\|\rho(t)-\rho^*\|_{L^p} \le \sum_{i=1}^N|z_i|\big(\|c_i(t)\|_{L^p}+\|c_i^*\|_{L^p}\big) \le \sum_{i=1}^N|z_i|\big(R+\bar Z_ie^{|z_i|M_*}|\Omega|^{1/p}\big) =: R'.
\]
Proposition \ref{prop:Linfty} with $q=p$ and \eqref{eq:cstarbounds} yield
\[
\|\Phi(t)\|_{L^\infty} \le M_* + \frac{C}{\eps}R' 
\] for all $t \ge 0$, where $C$ is a positive constant depending only on $\Omega$, $\Sigma_D$, $d$ and $p$.

{\bf{Step 2: qualitative convergence.}} By Step 1, Proposition \ref{prop:decayM} applies with $t_0=0$, $M=M_*+CR'/\eps$ and a suitable $B$. Hence there exists $r_\eps>0$ such that
\begin{multline*}
\Hst + \Pot + \Kin\\
\le \Big[\HstAt{0} + \PotAt{0} + \KinAt{0}\Big]e^{-r_\eps t},
\end{multline*} for any $t \ge 0$,
and, by \eqref{eq:L1M}, $\|c_i(t)-c_i^*\|_{L^1}\to0$ as $t\to\infty$ for each $i \in \left\{1, \dots, N \right\}$.

{\bf{Step 3: convergence of the potential in $L^\infty$.}} Fix $q\in(d/2,p)$ and let $\theta\in(0,1)$ be defined by $1/q=\theta+(1-\theta)/p$. By H\"older's inequality,
\[
\|\rho(t)-\rho^*\|_{L^q} \le \|\rho(t)-\rho^*\|_{L^1}^{\theta}\|\rho(t)-\rho^*\|_{L^p}^{1-\theta} \le \Big(\sum_{i=1}^N|z_i|\|c_i(t)-c_i^*\|_{L^1}\Big)^{\theta}(R')^{1-\theta},
\]
which tends to $0$ as $t\to\infty$ by Step 2. By Proposition \ref{prop:Linfty}, there is a constant $C$ depending only on $\Omega$, $\Sigma_D$, $d$ and $q$ such that
\[
\|\Phi(t)-\Phi^*\|_{L^\infty} \le \frac{C}{\eps}\|\rho(t)-\rho^*\|_{L^q} \longrightarrow 0
\] as $t\to\infty$.

{\bf{Step 4: choice of $T_\eps$.}} By Steps 2 and 3, there exists $T_\eps\ge0$ such that
\[
\|\Phi(t)-\Phi^*\|_{L^\infty} \le 1 
\] for all $t\ge T_\eps$, 
and
\begin{multline*}
\HstAt{T_\eps} + \PotAt{T_\eps}\\
+ \KinAt{T_\eps} \le e^2\min_{1\le j\le N}\int_\Omega c_j^*\,dx.
\end{multline*}

{\bf{Step 5: $\eps$-uniform constants.}} We apply Proposition \ref{prop:decayM} with $t_0=T_\eps$ and $M=M_*+1$; condition \eqref{eq:Mbound} holds by Step 4 and \eqref{eq:cstarbounds}. The condition \eqref{eq:Bbound} only involves the species with $\Gamma_i\neq\emptyset$. By Step 4 and \eqref{eq:cstarbounds}, for every such $i$,
\[
\frac{1}{Z_i}\int_\Omega c_i^*\,dx \le |\Omega|e^{|z_i|M_*},
\]
\begin{multline*}
\frac{1}{Z_i}\Big[\HstAt{T_\eps} + \PotAt{T_\eps} + \KinAt{T_\eps}\Big]\\
\le \frac{e^2}{Z_i}\int_\Omega c_i^*\,dx \le e^2|\Omega|e^{|z_i|M_*},
\end{multline*}
so that \eqref{eq:Bbound} holds with $B=e^2|\Omega|\max_ie^{|z_i|M_*}$, which depends only on $\Omega$, $z_1,\dots,z_N$ and $M_*$. Hence the rate $r$ given by Proposition \ref{prop:decayM} depends only on $d$, $\Omega$, $\Gamma_1,\dots,\Gamma_N$, $D_1,\dots,D_N$, $z_1,\dots,z_N$, $\nu$ and $M_*$; we denote it by $r_*$. Then \eqref{eq:decayM} gives \eqref{eq:main}. Moreover, for every $i$, $\int_\Omega c_i^*\,dx\le \bar Z_i|\Omega|e^{|z_i|M_*}$ by \eqref{eq:cstarbounds}, and the quantity in brackets in \eqref{eq:main} is bounded by $e^2\int_\Omega c_i^*\,dx\le e^2\bar Z_i|\Omega|e^{|z_i|M_*}$, so \eqref{eq:L1M} gives \eqref{eq:mainL1}. 
\end{proof}

\begin{remark}
When the boundary data are constant and electroneutral, that is, $W=0$, $\Gamma_i=\Sigma_D$ and $\gamma_i$ constant with $\sum_iz_i\gamma_i=0$, the condition (U) holds as soon as some $z_i\neq0$, and we have $Z_i=\gamma_i$, $F(0)=\sum_iz_i\gamma_i=0$, so $\Phi_b=0$ and $M_*=0$. Then $\Phi^*\equiv0$ and $c_i^*\equiv\gamma_i$, and Theorem \ref{thm:main} gives an asymptotic rate $r_*$ depending only on $d$, $\Omega$, $\Sigma_D$, $D_1,\dots,D_N$, $z_1,\dots,z_N$ and $\nu$, and in particular not on $\eps$ or on the boundary concentrations $\gamma_i$. This is the analogue, for reservoir boundary conditions, of the $\eps$-independent rate obtained in \cite[Theorem 2]{abdo2025logarithmic} for homogeneous Neumann boundary conditions.
\end{remark}

\begin{remark}
The rate $r_*$ in Theorem \ref{thm:main} is independent of $\eps$, but the time $T_\eps$ after which it is attained is not quantified. Steps 2--4 only provide $T_\eps$ in terms of the rate $r_\eps$ of Step 2, which depends on $M=M_*+CR'/\eps$ through the factors $e^{2|z_i|M}$ in Proposition \ref{prop:decayM}, and may therefore deteriorate as $\eps\to0$. Theorem \ref{thm:main} is thus a statement about the asymptotic rate of convergence.
\end{remark}

\begin{remark}
Assumption (U) is used only to ensure that $M_*$ and the constants $\bar Z_i$ do not depend on $\eps$. Proposition \ref{prop:decayM} holds without it. If all species are blocking, or if all species in contact with a reservoir have valences of the same sign, the argument of Proposition \ref{prop:PBblock} only yields an inequality between the masses of the species and no bound on $\Phi_b$, and Theorem \ref{thm:main} remains valid, with the same proof, provided one has bounds on $\|\Phi^*\|_{L^\infty}$ that are uniform in $\eps$. Assumption (U) is natural: it holds whenever the system is in contact with an electrolyte reservoir.
\end{remark}

\begin{remark}
The compatibility condition \eqref{eq:compat} is necessary for the existence of a Boltzmann steady state satisfying \eqref{eq:bc}. Without it, the boundary integral in the proof of Proposition \ref{prop:energy} does not vanish, the steady states carry nonzero electric currents, and the long-time behavior is of a different nature (see \cite{constantin2021nernst}). Finally, the restriction $d\in\{2,3,4\}$ comes only from the logarithmic Sobolev inequality \eqref{eq:AL} established in \cite{abdo2025logarithmic}.
\end{remark}

\appendix

\section{Elliptic Estimates}\label{sec:elliptic}

In this appendix we prove the elliptic estimates used in Section \ref{sec:app}. Throughout, $\Sigma_D$, $\Sigma_N$ and $H^1_{\Sigma_D}(\Omega)$ are as in Section \ref{sec:app}. We fix an exponent
\begin{equation}\label{eq:s}
s = \frac{2d}{d-2} \ \text{ if } d=3,4, \qquad s<\infty \text{ arbitrary if } d=2.
\end{equation}

\begin{lemma}\label{lem:sob}
There exists a constant $C_0>0$ depending only on $\Omega$, $\Sigma_D$, $d$ and $s$, such that
\[
\|v\|_{L^s} \le C_0\|\nabla v\|_{L^2} 
\] for all $v\in H^1_{\Sigma_D}(\Omega).$
\end{lemma}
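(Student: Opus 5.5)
The plan is to combine the Friedrichs inequality \eqref{eq:friedrichs} with the Sobolev embedding $H^1(\Omega)\hookrightarrow L^s(\Omega)$. Since $\Omega$ is bounded with smooth boundary, the standard embedding gives
\[
\|v\|_{L^s}\le C\big(\|v\|_{L^2}+\|\nabla v\|_{L^2}\big)
\]
for all $v\in H^1(\Omega)$. Here $C$ depends only on $\Omega$, $d$ and $s$. For $d=3,4$ we take the critical exponent $s=2d/(d-2)$, which is admissible because $\Omega$ is a bounded Lipschitz (indeed smooth) extension domain. For $d=2$, $H^1(\Omega)$ embeds into every $L^s$ with $s<\infty$.

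The second step is to remove the $L^2$ term. Every $v\in H^1_{\Sigma_D}(\Omega)$ has vanishing trace on $\Sigma_D$, which has positive surface measure. So \eqref{eq:friedrichs} applies with $\Gamma=\Sigma_D$ and gives $\|v\|_{L^2}\le C\|\nabla v\|_{L^2}$, with $C$ depending only on $\Omega$ and $\Sigma_D$. Substituting this into the embedding inequality yields $\|v\|_{L^s}\le C_0\|\nabla v\|_{L^2}$ with $C_0$ depending only on $\Omega$, $\Sigma_D$, $d$ and $s$.

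The only step that is not purely mechanical is the Friedrichs inequality itself. The paper recalls it without proof, and I would simply invoke it. If a justification were required, I would argue by contradiction and compactness. Suppose $v_n\in H^1_{\Sigma_D}$ with $\|v_n\|_{L^2}=1$ and $\|\nabla v_n\|_{L^2}\to0$. By Rellich, a subsequence converges in $L^2$, and in fact in $H^1$, to a constant function, using that $\Omega$ is connected. The limit has unit $L^2$ norm, so it is a nonzero constant. Continuity of the trace then forces this constant to vanish on $\Sigma_D$, since $|\Sigma_D|>0$, which is a contradiction. With that in hand, the lemma follows immediately.
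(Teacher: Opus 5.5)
Your proposal is correct and matches the paper's proof: you use the Sobolev embedding $H^1(\Omega)\hookrightarrow L^s(\Omega)$ and then the Friedrichs inequality with $\Gamma=\Sigma_D$ to absorb the $L^2$ term. Your compactness sketch of the Friedrichs inequality is also sound, though the paper just recalls that inequality without proof.
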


\begin{proof} By the Sobolev embedding $H^1(\Omega)\hookrightarrow L^s(\Omega)$, we have $\|v\|_{L^s}\le C(\|v\|_{L^2}+\|\nabla v\|_{L^2})$, and the Friedrichs inequality \eqref{eq:friedrichs} with $\Gamma=\Sigma_D$ bounds $\|v\|_{L^2}$ by $C\|\nabla v\|_{L^2}$.
\end{proof}

\begin{lemma}\label{lem:iter}
Let $k_0\in\mathbb R$, and let $\phi:[k_0,\infty)\to[0,\infty)$ be nonincreasing. Suppose that there exist constants $C>0$, $\alpha>0$ and $\beta>1$ such that
\[
\phi(h) \le \Big(\frac{C}{h-k}\Big)^{\alpha}\phi(k)^{\beta} 
\] for all $ h>k\ge k_0.$
Then $\phi(k_0+\delta)=0$, where $\delta=C\,\phi(k_0)^{(\beta-1)/\alpha}\,2^{\beta/(\beta-1)}$.
\end{lemma}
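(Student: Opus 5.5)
This is Stampacchia's iteration lemma, and I will prove it by iterating the hypothesis along a geometric sequence of levels. Set $k_n = k_0 + \delta(1-2^{-n})$ for $n\ge0$. The sequence $k_n$ increases to $k_0+\delta$ and satisfies $k_{n+1}-k_n=\delta 2^{-(n+1)}$. The hypothesis with $h=k_{n+1}$ and $k=k_n$ gives
\[
\phi(k_{n+1}) \le \Big(\frac{C\,2^{n+1}}{\delta}\Big)^{\alpha}\phi(k_n)^{\beta}.
\]
The goal is to show by induction that $\phi(k_n)\le \phi(k_0)\,2^{-n\mu}$ for a suitable $\mu>0$. Since $\phi$ is nonincreasing and nonnegative, we then get $0\le\phi(k_0+\delta)\le\phi(k_n)\to0$, so $\phi(k_0+\delta)=0$.

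For the induction I will take $\mu=\alpha/(\beta-1)$. If $\phi(k_0)=0$, then $\delta=0$ and monotonicity together with nonnegativity settle the claim immediately, so assume $\phi(k_0)>0$. The case $n=0$ is trivial. Assuming the bound holds at step $n$, the displayed inequality gives
\[
\phi(k_{n+1}) \le \frac{C^\alpha 2^{(n+1)\alpha}}{\delta^\alpha}\,\phi(k_0)^\beta\, 2^{-n\mu\beta}.
\]
We need the right-hand side to be at most $\phi(k_0)2^{-(n+1)\mu}$, which is equivalent to
\[
\delta^\alpha \ge C^\alpha\phi(k_0)^{\beta-1}\,2^{(n+1)\alpha - n\mu\beta+(n+1)\mu}.
\]
With $\mu=\alpha/(\beta-1)$, the coefficient of $n$ in the exponent is $\alpha-\mu(\beta-1)=0$, so the exponent reduces to the constant $\alpha+\mu=\alpha\beta/(\beta-1)$. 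The requirement therefore becomes $\delta\ge C\phi(k_0)^{(\beta-1)/\alpha}2^{\beta/(\beta-1)}$, and this holds with equality for the stated $\delta$.

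The only real obstacle is this bookkeeping of exponents. One has to choose the decay rate $\mu$ so that the $n$-dependence cancels exactly, which leaves precisely the constant $2^{\beta/(\beta-1)}$ appearing in the statement.
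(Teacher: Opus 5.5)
Your proof is correct and follows the paper's argument essentially verbatim: the same levels $k_n=k_0+\delta(1-2^{-n})$, the same rate $\mu=\alpha/(\beta-1)$, the same inductive bound $\phi(k_n)\le\phi(k_0)2^{-n\mu}$, and the same conclusion via monotonicity of $\phi$. Your exponent bookkeeping and your handling of the degenerate case $\phi(k_0)=0$ (where $\delta=0$) also match the paper's.
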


\begin{proof}
If $\phi(k_0)=0$ there is nothing to prove, so we assume $\phi(k_0)>0$. Let $\mu=\alpha/(\beta-1)$ and $k_n=k_0+\delta(1-2^{-n})$ for $n\ge0$. We claim that
\[
\phi(k_n) \le \phi(k_0)\,2^{-n\mu} 
\] for all $n\ge0.$
This is clear for $n=0$. Suppose it holds for some $n$. Since $k_{n+1}-k_n=\delta2^{-(n+1)}$, the assumption gives
\[
\phi(k_{n+1}) \le \Big(\frac{C2^{n+1}}{\delta}\Big)^{\alpha}\phi(k_n)^{\beta} \le C^\alpha\delta^{-\alpha}2^{(n+1)\alpha}\phi(k_0)^{\beta}2^{-n\mu\beta}.
\]
By the choice of $\delta$, we have $\delta^\alpha=C^\alpha\phi(k_0)^{\beta-1}2^{\alpha\beta/(\beta-1)}=C^\alpha\phi(k_0)^{\beta-1}2^{\alpha+\mu}$, so that
\[
\phi(k_{n+1}) \le \phi(k_0)\,2^{(n+1)\alpha-\alpha-\mu-n\mu\beta} = \phi(k_0)\,2^{n(\alpha-\mu\beta)-\mu} = \phi(k_0)\,2^{-(n+1)\mu},
\]
where we used $\alpha=\mu(\beta-1)$. This proves the claim. Since $\phi$ is nonincreasing and $k_n<k_0+\delta$, we deduce that $\phi(k_0+\delta)\le\phi(k_n)\le\phi(k_0)2^{-n\mu}$ for all $n$, hence $\phi(k_0+\delta)=0$.
\end{proof}

\begin{proposition}\label{prop:Linfty}
Let $q>d/2$ and $f\in L^q(\Omega)$. Let $\psi\in H^1_{\Sigma_D}(\Omega)$ satisfy
\begin{equation}\label{eq:weakmixed}
\eps\int_\Omega\nabla\psi\cdot\nabla\varphi\,dx = \int_\Omega f\varphi\,dx \qquad \text{for all } \varphi\in H^1_{\Sigma_D}(\Omega),
\end{equation}
that is, $\psi$ is a weak solution of $-\eps\Delta\psi=f$ in $\Omega$, $\psi=0$ on $\Sigma_D$, $\partial_n\psi=0$ on $\Sigma_N$. Then
\[
\|\psi\|_{L^\infty} \le \frac{C}{\eps}\|f\|_{L^q},
\]
where $C>0$ depends only on $\Omega$, $\Sigma_D$, $d$ and $q$.
\end{proposition}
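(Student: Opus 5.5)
We will prove the bound by Stampacchia truncation, using Lemma \ref{lem:sob} and the iteration Lemma \ref{lem:iter}. By linearity and since $-\psi$ solves the same problem with $-f$, it suffices to bound $\sup\psi$ from above. We may also rescale: $\eps\psi$ solves the problem with $\eps=1$, so we assume $\eps=1$ and prove $\|\psi\|_{L^\infty}\le C\|f\|_{L^q}$.

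For $k\ge0$ we set $A_k=\{\psi>k\}$ and test \eqref{eq:weakmixed} with $\varphi=(\psi-k)_+$. This is admissible: $\varphi\in H^1(\Omega)$, and its trace vanishes on $\Sigma_D$ because $\psi=0\le k$ there. We obtain
\[
\int_{A_k}|\nabla(\psi-k)_+|^2dx=\int_{A_k}f(\psi-k)_+dx\le\|f\|_{L^q}\|(\psi-k)_+\|_{L^s}|A_k|^{1-1/q-1/s}.
\]
Lemma \ref{lem:sob} bounds the left side from below by $C_0^{-2}\|(\psi-k)_+\|_{L^s}^2$. Hence
\[
\|(\psi-k)_+\|_{L^s}\le C_0^2\|f\|_{L^q}|A_k|^{1-1/q-1/s}.
\]
For $h>k$ we have $(h-k)|A_h|^{1/s}\le\|(\psi-k)_+\|_{L^s}$, which gives
\[
|A_h|\le\Big(\frac{C_0^2\|f\|_{L^q}}{h-k}\Big)^{s}|A_k|^{\beta},\qquad \beta=s\Big(1-\frac1q-\frac1s\Big)=s\Big(1-\frac1q\Big)-1.
\]

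The key check is that $\beta>1$, which is exactly where $q>d/2$ is used. For $d=3,4$, with $s=2d/(d-2)$, the condition $\beta>1$ is equivalent to $1-1/q>2/s=(d-2)/d$, that is, $q>d/2$. For $d=2$, $q>1$, and we choose $s$ large enough that $s(1-1/q)>2$; the constant then depends on $q$ through this choice of $s$. We also need the exponent $1-1/q-1/s$ to be nonnegative for Hölder's inequality, and this follows from $\beta>1$.

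We then apply Lemma \ref{lem:iter} with $\phi(k)=|A_k|$, which is nonincreasing, $k_0=0$, $\alpha=s$, and constant $C_0^2\|f\|_{L^q}$. It gives $|A_\delta|=0$ with $\delta=C_0^2\|f\|_{L^q}|\Omega|^{(\beta-1)/s}2^{\beta/(\beta-1)}$. So $\psi\le C\|f\|_{L^q}$ almost everywhere, where $C$ depends only on $\Omega,\Sigma_D,d,q$. Applying the same argument to $-\psi$ and undoing the scaling by $\eps$ yields the claim.

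The only delicate points are the admissibility of the truncated test function in $H^1_{\Sigma_D}$ and the exponent bookkeeping giving $\beta>1$.
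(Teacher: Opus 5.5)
Your proposal is correct and follows the paper's proof essentially step for step: the same Stampacchia truncation with $(\psi-k)_+$, H\"older with exponents $q$, $s$, $(1-1/q-1/s)^{-1}$, Lemma~\ref{lem:sob}, the level-set inequality with $\beta=s(1-1/q)-1>1$, and Lemma~\ref{lem:iter} with $k_0=0$, $\alpha=s$, applied to $\psi$ and to $-\psi$. The only differences are cosmetic: you rescale to $\eps=1$, and for $d=2$ you leave $s$ unspecified, whereas the paper takes $s=4q/(q-1)$ so that $\beta=3$.
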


\begin{proof}
Let $C_0$ be the constant of Lemma \ref{lem:sob}. We choose $s$ as in \eqref{eq:s}, with $s=4q/(q-1)$ if $d=2$, and set
\[
\beta = s\Big(1-\frac1q\Big)-1.
\]
If $d=3,4$, then $s(1-1/q)>s(1-2/d)=2$ because $q>d/2$; if $d=2$, then $s(1-1/q)=4$. In both cases $\beta>1$ and $1-1/q-1/s>0$.

For $k\ge0$, let $A(k)=\{x\in\Omega:\psi(x)>k\}$. Since $\psi=0$ on $\Sigma_D$ and $k\ge0$, the function $(\psi-k)_+$ belongs to $H^1_{\Sigma_D}(\Omega)$, and $\nabla(\psi-k)_+=\mathbf 1_{A(k)}\nabla\psi$. Taking $\varphi=(\psi-k)_+$ in \eqref{eq:weakmixed} and using H\"older's inequality with exponents $q$, $s$ and $(1-1/q-1/s)^{-1}$, we obtain
\[
\eps\int_\Omega|\nabla(\psi-k)_+|^2dx = \int_{A(k)}f(\psi-k)_+\,dx \le \|f\|_{L^q}\|(\psi-k)_+\|_{L^s}|A(k)|^{1-\frac1q-\frac1s}.
\]
By Lemma \ref{lem:sob}, $\|(\psi-k)_+\|_{L^s}^2\le C_0^2\|\nabla(\psi-k)_+\|_{L^2}^2$, and therefore
\[
\|(\psi-k)_+\|_{L^s} \le \frac{C_0^2}{\eps}\|f\|_{L^q}|A(k)|^{1-\frac1q-\frac1s}.
\]
For $h>k\ge0$ we have $(\psi-k)_+\ge h-k$ on $A(h)\subset A(k)$, so that
\[
(h-k)|A(h)|^{\frac1s} \le \|(\psi-k)_+\|_{L^s} \le \frac{C_0^2}{\eps}\|f\|_{L^q}|A(k)|^{1-\frac1q-\frac1s}.
\]
Raising to the power $s$, we arrive at
\[
|A(h)| \le \Big(\frac{C_0^2\|f\|_{L^q}}{\eps(h-k)}\Big)^{s}|A(k)|^{\beta}.
\]
The function $k\mapsto|A(k)|$ is nonincreasing, and $|A(0)|\le|\Omega|$. Lemma \ref{lem:iter} with $k_0=0$, $\alpha=s$ and $C=C_0^2\|f\|_{L^q}/\eps$ yields $|A(\delta)|=0$ with $\delta=\frac{C_0^2}{\eps}\|f\|_{L^q}|\Omega|^{(\beta-1)/s}2^{\beta/(\beta-1)}$, that is, $\psi\le\delta$ almost everywhere. Since $-\psi$ satisfies \eqref{eq:weakmixed} with $f$ replaced by $-f$, the same argument gives $-\psi\le\delta$. This proves the proposition with $C=C_0^2|\Omega|^{(\beta-1)/s}2^{\beta/(\beta-1)}$, which depends only on $\Omega$, $\Sigma_D$, $d$ and $q$.
\end{proof}

We now turn to the Poisson--Boltzmann equation. Let $Z_1,\dots,Z_N>0$ be constants and let $z_1,\dots,z_N$ be real numbers such that $z_i>0$ for some $i$ and $z_j<0$ for some $j$. We define
\[
F(\phi) = \sum_{i=1}^N z_iZ_i\,e^{-z_i\phi}, \qquad \phi\in\mathbb R.
\]
Then $F'(\phi)=-\sum_iz_i^2Z_ie^{-z_i\phi}<0$. Moreover, as $\phi\to-\infty$ the terms with $z_i>0$ tend to $+\infty$ while the others tend to $0$ or are constant, so $F(\phi)\to+\infty$; similarly $F(\phi)\to-\infty$ as $\phi\to+\infty$. Hence $F$ has a unique zero, which we denote by $\Phi_b$.

\begin{proposition}\label{prop:PB}
Let $W\in L^\infty(\Sigma_D)$ be the trace on $\Sigma_D$ of a function in $H^1(\Omega)$, and let $\Phi^*\in H^1(\Omega)\cap L^\infty(\Omega)$ satisfy $\Phi^*=W$ on $\Sigma_D$ and
\begin{equation}\label{eq:weakPB}
\eps\int_\Omega\nabla\Phi^*\cdot\nabla\varphi\,dx = \int_\Omega F(\Phi^*)\varphi\,dx \qquad \text{for all } \varphi\in H^1_{\Sigma_D}(\Omega),
\end{equation}
that is, $\Phi^*$ is a weak solution of $-\eps\Delta\Phi^*=F(\Phi^*)$ in $\Omega$, $\Phi^*=W$ on $\Sigma_D$, $\partial_n\Phi^*=0$ on $\Sigma_N$. Then
\[
\min\Big\{\inf_{\Sigma_D}W,\ \Phi_b\Big\} \le \Phi^* \le \max\Big\{\sup_{\Sigma_D}W,\ \Phi_b\Big\} \quad \text{a.e. in } \Omega.
\]
In particular, $\|\Phi^*\|_{L^\infty}\le M_*$, where
\begin{equation}\label{eq:Mstar}
M_* = \max\big\{\|W\|_{L^\infty(\Sigma_D)},\ |\Phi_b|\big\}
\end{equation}
depends only on $W$, $z_1,\dots,z_N$ and $Z_1,\dots,Z_N$, and not on $\eps$.
\end{proposition}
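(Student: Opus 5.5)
The plan is a Stampacchia truncation (weak maximum principle) argument, testing \eqref{eq:weakPB} against the positive part of $\Phi^*$ minus a suitable level. Set $k=\max\{\sup_{\Sigma_D}W,\Phi_b\}$ and $\varphi=(\Phi^*-k)_+$. Since $\Phi^*=W\le k$ on $\Sigma_D$, the trace of $\Phi^*-k$ is nonpositive there. Hence $\varphi\in H^1_{\Sigma_D}(\Omega)$, with $\nabla\varphi=\mathbf 1_{\{\Phi^*>k\}}\nabla\Phi^*$. (Because $\Phi^*\in L^\infty$, $\varphi$ is also bounded, so $F(\Phi^*)\varphi$ is integrable.) Substituting into \eqref{eq:weakPB} gives
\[
\eps\int_\Omega|\nabla(\Phi^*-k)_+|^2dx=\int_{\{\Phi^*>k\}}F(\Phi^*)(\Phi^*-k)\,dx .
\]

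Next use monotonicity. On the set $\{\Phi^*>k\}$ we have $\Phi^*>k\ge\Phi_b$. Since $F$ is strictly decreasing with $F(\Phi_b)=0$, this gives $F(\Phi^*)<0$ there. So the right-hand side is $\le0$, and therefore $\nabla(\Phi^*-k)_+=0$ a.e.

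We then conclude that $(\Phi^*-k)_+$ vanishes. It is an element of $H^1_{\Sigma_D}(\Omega)$ with zero gradient, so the Friedrichs inequality \eqref{eq:friedrichs} with $\Gamma=\Sigma_D$ (which has positive surface measure) gives $(\Phi^*-k)_+=0$. Thus $\Phi^*\le k$ a.e.

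The lower bound is symmetric. Take $k'=\min\{\inf_{\Sigma_D}W,\Phi_b\}$ and test with $\varphi=-(k'-\Phi^*)_+=-(\Phi^*-k')_-$, again an admissible element of $H^1_{\Sigma_D}$. This yields
\[
\eps\int_\Omega|\nabla(k'-\Phi^*)_+|^2dx=-\int_{\{\Phi^*<k'\}}F(\Phi^*)(k'-\Phi^*)\,dx\le0,
\]
because $F(\Phi^*)>0$ where $\Phi^*<\Phi_b$. Friedrichs then gives $\Phi^*\ge k'$ a.e. The bound $\|\Phi^*\|_{L^\infty}\le M_*$ follows immediately, since both $|k|$ and $|k'|$ are at most $\max\{\|W\|_{L^\infty(\Sigma_D)},|\Phi_b|\}$. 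This quantity is visibly independent of $\eps$: $\Phi_b$ is determined by the $z_i$ and $Z_i$ alone.

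The only mildly delicate point is admissibility of the test functions. We need that truncation preserves $H^1$, and that the trace of $(\Phi^*-k)_+$ equals $(W-k)_+=0$ on $\Sigma_D$. Both are standard facts about Lipschitz truncations of $H^1$ functions: the trace commutes with $x\mapsto x_+$. The argument never uses the sign of $\eps$ beyond $\eps>0$, so no $\eps$-dependence enters.
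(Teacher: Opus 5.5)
Your proposal is correct and follows essentially the paper's own argument. It uses the same Stampacchia test functions, $(\Phi^*-k)_+$ and, up to sign, $(k'-\Phi^*)_+$, and the same sign argument from $F$ decreasing with $F(\Phi_b)=0$. The only cosmetic difference is the last step: you conclude from $\nabla(\Phi^*-k)_+=0$ via the Friedrichs inequality \eqref{eq:friedrichs}, whereas the paper notes that the truncation is constant on the connected domain $\Omega$ and must be zero because its trace vanishes on $\Sigma_D$; these amount to the same thing.
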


\begin{proof}
Let $m=\max\{\sup_{\Sigma_D}W,\Phi_b\}$. Since $\Phi^*=W\le m$ on $\Sigma_D$, the function $(\Phi^*-m)_+$ belongs to $H^1_{\Sigma_D}(\Omega)$, and $\nabla(\Phi^*-m)_+=\mathbf 1_{\{\Phi^*>m\}}\nabla\Phi^*$. Taking $\varphi=(\Phi^*-m)_+$ in \eqref{eq:weakPB}, we obtain
\[
\eps\int_\Omega|\nabla(\Phi^*-m)_+|^2dx = \int_{\{\Phi^*>m\}}F(\Phi^*)(\Phi^*-m)\,dx.
\]
Since $F$ is decreasing and $m\ge\Phi_b$, we have $F(\Phi^*)<F(\Phi_b)=0$ on $\{\Phi^*>m\}$, so the right-hand side is nonpositive. Hence $\nabla(\Phi^*-m)_+=0$, so $(\Phi^*-m)_+$ is constant in the connected domain $\Omega$. Since its trace vanishes on $\Sigma_D$, which has positive surface measure, this constant is zero, that is, $\Phi^*\le m$. The lower bound follows in the same way by taking $\varphi=(m'-\Phi^*)_+$ with $m'=\min\{\inf_{\Sigma_D}W,\Phi_b\}$, and using $F(\Phi^*)>0$ on $\{\Phi^*<m'\}$.
\end{proof}

When blocking species are present, the constants $Z_k$ of these species in \eqref{eq:Boltz} depend on $\Phi^*$, and so does $\Phi_b$. The following proposition shows that $\Phi_b$, and hence $\Phi^*$, is nevertheless bounded independently of $\eps$, provided that reservoir species of both signs are present. We denote by $P_R$ and $N_R$ the sets of indices $i$ with $\Gamma_i\neq\emptyset$ and $z_i>0$, respectively $z_i<0$, and by $P_B$ and $N_B$ the sets of indices $k$ with $\Gamma_k=\emptyset$ and $z_k>0$, respectively $z_k<0$. Species with zero valence do not contribute to $F$. For $k\in P_B\cup N_B$, we let $N_k>0$ be given, and we define
\begin{equation}\label{eq:Apm}
A_+ = \sum_{i\in P_R}z_iZ_ie^{z_i\|W\|_{L^\infty(\Sigma_D)}} + \sum_{k\in P_B}\frac{z_kN_k}{|\Omega|}, \qquad A_- = \sum_{j\in N_R}|z_j|Z_je^{|z_j|\|W\|_{L^\infty(\Sigma_D)}} + \sum_{k\in N_B}\frac{|z_k|N_k}{|\Omega|}.
\end{equation}

\begin{proposition}\label{prop:PBblock}
Suppose that $P_R\neq\emptyset$ and $N_R\neq\emptyset$, and let $Z_i>0$ be given constants for $i\in P_R\cup N_R$. Let $W$ be as in Proposition \ref{prop:PB}, and let $\Phi^*\in H^1(\Omega)\cap L^\infty(\Omega)$ satisfy $\Phi^*=W$ on $\Sigma_D$ and \eqref{eq:weakPB}, where, for $k\in P_B\cup N_B$,
\[
Z_k = \frac{N_k}{\int_\Omega e^{-z_k\Phi^*}dx}.
\]
Then $\|\Phi^*\|_{L^\infty}\le M_*$, where
\begin{equation}\label{eq:Mstarblock}
M_* = \max\Big\{\|W\|_{L^\infty(\Sigma_D)},\ \min_{j\in N_R}\frac{1}{|z_j|}\ln\frac{A_+}{|z_j|Z_j},\ \min_{i\in P_R}\frac{1}{z_i}\ln\frac{A_-}{z_iZ_i}\Big\}
\end{equation}
depends only on $W$, $|\Omega|$, $z_1,\dots,z_N$, the constants $Z_i$ with $\Gamma_i\neq\emptyset$, and the masses $N_k$, and not on $\eps$. Moreover, for every $k\in P_B\cup N_B$,
\[
\frac{N_k}{|\Omega|}e^{-|z_k|M_*} \le Z_k \le \frac{N_k}{|\Omega|}e^{|z_k|M_*}.
\]
\end{proposition}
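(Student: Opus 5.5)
The plan is to reduce everything to Proposition \ref{prop:PB} and then bound the zero $\Phi_b$ of $F$ by a case split that breaks the circular dependence of the $Z_k$ on $\Phi^*$.

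\textbf{Step 1: reduction to Proposition \ref{prop:PB}.} Since $\Phi^*\in L^\infty(\Omega)$, every blocking constant $Z_k=N_k/\int_\Omega e^{-z_k\Phi^*}dx$ is a finite positive number. Hence $F(\phi)=\sum_i z_iZ_ie^{-z_i\phi}$, with these $Z_k$ frozen, is a function of the same form as before the statement. Because $P_R$ and $N_R$ are nonempty, there are indices with $z_i>0$ and with $z_j<0$, so $F$ is strictly decreasing and has a unique zero $\Phi_b$. Applying Proposition \ref{prop:PB} to this frozen $F$ gives
\[
\min\{\inf_{\Sigma_D}W,\Phi_b\}\le\Phi^*\le\max\{\sup_{\Sigma_D}W,\Phi_b\}\quad\text{a.e.}
\]
It therefore suffices to prove
\[
\Phi_b\le \max\Big\{\|W\|_{L^\infty(\Sigma_D)},\ \min_{j\in N_R}\tfrac{1}{|z_j|}\ln\tfrac{A_+}{|z_j|Z_j}\Big\}
\]
together with the symmetric lower bound. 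Both quantities are at most $M_*$, which gives $\|\Phi^*\|_{L^\infty}\le M_*$.

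\textbf{Step 2: upper bound on $\Phi_b$.} This is the main obstacle, because the $Z_k$ depend on $\Phi^*$. I would split into cases.

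If $\Phi_b\le\|W\|_{L^\infty(\Sigma_D)}$, there is nothing to prove.

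Otherwise $\Phi_b>\|W\|_{L^\infty(\Sigma_D)}\ge\sup_{\Sigma_D}W$, and Step 1 yields $\Phi^*\le\Phi_b$ a.e. This one-sided bound is exactly what is needed. For $k\in P_B$ we get $\int_\Omega e^{-z_k\Phi^*}dx\ge|\Omega|e^{-z_k\Phi_b}$, hence $z_kZ_ke^{-z_k\Phi_b}\le z_kN_k/|\Omega|$. For $i\in P_R$, since $\Phi_b>0$, we have $e^{-z_i\Phi_b}\le1\le e^{z_i\|W\|_{L^\infty(\Sigma_D)}}$.

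Now write $F(\Phi_b)=0$ as
\[
\sum_{z_i>0}z_iZ_ie^{-z_i\Phi_b}=\sum_{z_j<0}|z_j|Z_je^{|z_j|\Phi_b}.
\]
By the two bounds above, the left-hand side is at most $A_+$. Dropping all but one (nonnegative) term on the right, for each $j\in N_R$ we get $|z_j|Z_je^{|z_j|\Phi_b}\le A_+$. Thus $\Phi_b\le\frac1{|z_j|}\ln\frac{A_+}{|z_j|Z_j}$ for every $j\in N_R$, and we take the minimum over $j$.

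\textbf{Step 3: lower bound on $\Phi_b$.} This is symmetric. If $\Phi_b<-\|W\|_{L^\infty(\Sigma_D)}$, then Step 1 gives $\Phi^*\ge\Phi_b$. For $k\in N_B$ this yields $|z_k|Z_ke^{|z_k|\Phi_b}\le|z_k|N_k/|\Omega|$, and for $j\in N_R$ we have $e^{|z_j|\Phi_b}\le e^{|z_j|\|W\|}$. Hence the negative side of $F(\Phi_b)=0$ is at most $A_-$, while it is at least $z_iZ_ie^{-z_i\Phi_b}$ for each $i\in P_R$. This gives $-\Phi_b\le\frac1{z_i}\ln\frac{A_-}{z_iZ_i}$, and we minimize over $i\in P_R$.

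\textbf{Step 4: bounds on the blocking constants.} Combining Steps 1--3 gives $|\Phi^*|\le M_*$ a.e. Therefore $|\Omega|e^{-|z_k|M_*}\le\int_\Omega e^{-z_k\Phi^*}dx\le|\Omega|e^{|z_k|M_*}$, and inverting yields the stated two-sided bounds on $Z_k$. Finally, $M_*$ is built only from $W$, $|\Omega|$, the valences, the reservoir constants $Z_i$, and the masses $N_k$, so it is independent of $\eps$.
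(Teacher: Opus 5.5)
Your proof is correct and follows the paper's argument: you freeze the blocking constants $Z_k$ to apply Proposition \ref{prop:PB}, use the one-sided bound $\Phi^*\le\Phi_b$ (resp.\ $\Phi^*\ge\Phi_b$) in the case where $\Phi_b$ is the active bound to control the blocking terms by $|z_k|N_k/|\Omega|$, and then keep a single reservoir term of the opposite sign in $F(\Phi_b)=0$. The only cosmetic difference is that you split cases at $\|W\|_{L^\infty(\Sigma_D)}$ instead of $\sup_{\Sigma_D}W$. This works equally well, since it gives $\Phi_b>0$ and hence $e^{-z_i\Phi_b}\le 1$ for the positive reservoir terms.
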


\begin{proof}
Once $\Phi^*$ is given, all the constants $Z_1,\dots,Z_N$ are fixed positive numbers, so Proposition \ref{prop:PB} applies, and it suffices to bound $\Phi_b$ in the case where it is the active bound.

We first prove the upper bound. If $\Phi_b\le\sup_{\Sigma_D}W$, then $\Phi^*\le\|W\|_{L^\infty(\Sigma_D)}$ by Proposition \ref{prop:PB}. Otherwise, $\Phi_b>\sup_{\Sigma_D}W\ge-\|W\|_{L^\infty(\Sigma_D)}$ and $\Phi^*\le\Phi_b$ in $\Omega$. For $i\in P_R$, we have $z_iZ_ie^{-z_i\Phi_b}\le z_iZ_ie^{z_i\|W\|_{L^\infty(\Sigma_D)}}$. For $k\in P_B$, since $\Phi^*\le\Phi_b$, we have $\int_\Omega e^{-z_k\Phi^*}dx\ge|\Omega|e^{-z_k\Phi_b}$, and therefore
\[
z_kZ_ke^{-z_k\Phi_b} = \frac{z_kN_ke^{-z_k\Phi_b}}{\int_\Omega e^{-z_k\Phi^*}dx} \le \frac{z_kN_k}{|\Omega|}.
\]
Hence the sum of the positive terms of $F(\Phi_b)$ is bounded by $A_+$. Since $F(\Phi_b)=0$, the sum of the absolute values of the negative terms is bounded by $A_+$ as well. Keeping a single term $j\in N_R$, we obtain $|z_j|Z_je^{|z_j|\Phi_b}\le A_+$, that is,
\[
\Phi_b \le \frac{1}{|z_j|}\ln\frac{A_+}{|z_j|Z_j}.
\]
Taking the minimum over $j\in N_R$ yields $\Phi^*\le M_*$.

The lower bound is obtained in the same way. If $\Phi_b\ge\inf_{\Sigma_D}W$, then $\Phi^*\ge-\|W\|_{L^\infty(\Sigma_D)}$. Otherwise, $\Phi_b<\inf_{\Sigma_D}W\le\|W\|_{L^\infty(\Sigma_D)}$ and $\Phi^*\ge\Phi_b$ in $\Omega$. For $j\in N_R$, we have $|z_j|Z_je^{|z_j|\Phi_b}\le|z_j|Z_je^{|z_j|\|W\|_{L^\infty(\Sigma_D)}}$, and for $k\in N_B$, since $\int_\Omega e^{|z_k|\Phi^*}dx\ge|\Omega|e^{|z_k|\Phi_b}$, we have $|z_k|Z_ke^{|z_k|\Phi_b}\le|z_k|N_k/|\Omega|$. Hence the sum of the absolute values of the negative terms of $F(\Phi_b)$ is bounded by $A_-$, and so is the sum of the positive terms. Keeping a single term $i\in P_R$, we obtain $z_iZ_ie^{-z_i\Phi_b}\le A_-$, that is,
\[
\Phi_b \ge -\frac{1}{z_i}\ln\frac{A_-}{z_iZ_i}.
\]
Taking the minimum over $i\in P_R$ yields $\Phi^*\ge-M_*$.

Finally, the bounds on $Z_k$ follow from $\|\Phi^*\|_{L^\infty}\le M_*$ and the definition of $Z_k$.
\end{proof}

\vspace{0.5cm}

{\bf{Acknowledgments.}} E.A. was partially supported by the University Research Board (URB) of the American University of Beirut under Grant No. 104752. 

\vspace{0.5cm}

{\bf{Data Availability Statement.}} The research does not have any associated data.

\vspace{0.5cm}

{\bf{Conflict of Interest.}} The author declares that he has no conflict of interest.
\vspace{0.5cm}

{\bf{Declaration on the use of generative AI.}} The problem and main ideas of the paper were suggested by the author based on his previous work \cite{abdo2025logarithmic}. A draft was generated by Opus 5.5 (Anthropic) and subsequently improved, partially rewritten, and thoroughly checked by the author. The author takes full responsibility for the accuracy and originality of the work.

\bibliographystyle{plain}
\bibliography{references}

@article{abdo2025logarithmic,
  title={Logarithmic Sobolev inequalities for bounded domains and applications to drift-diffusion equations},
  author={Abdo, Elie and Lee, Fizay-Noah},
  journal={Journal of Functional Analysis},
  volume={288},
  number={1},
  pages={110716},
  year={2025},
  publisher={Elsevier}
}

@article{constantin2019nernst,
  title={On the nernst--planck--navier--stokes system},
  author={Constantin, Peter and Ignatova, Mihaela},
  journal={Archive for Rational Mechanics and Analysis},
  volume={232},
  number={3},
  pages={1379--1428},
  year={2019},
  publisher={Springer}
}

@article{constantin2022nernst,
  title={Nernst-Planck-Navier-Stokes systems near equilibrium},
  author={Constantin, Peter and Ignatova, Mihaela and Lee, Fizay-Noah},
  journal={Pure Appl. Funct. Anal},
  volume={7},
  number={1},
  pages={175--196},
  year={2022}
}

@article{constantin2021nernst,
  title={Nernst--Planck--Navier--Stokes systems far from equilibrium},
  author={Constantin, Peter and Ignatova, Mihaela and Lee, Fizay-Noah},
  journal={Archive for Rational Mechanics and Analysis},
  volume={240},
  number={2},
  pages={1147--1168},
  year={2021},
  publisher={Springer}
}

@article{constantin2021interior,
  title={Interior Electroneutrality in Nernst--Planck--Navier--Stokes Systems},
  author={Constantin, Peter and Ignatova, Mihaela and Lee, Fizay-Noah},
  journal={Archive for Rational Mechanics and Analysis},
  volume={242},
  number={2},
  pages={1091--1118},
  year={2021},
  publisher={Springer}
}

@article{gajewski1986basic,
  title={On the basic equations for carrier transport in semiconductors},
  author={Gajewski, Herbert and Gr{\"o}ger, Konrad},
  journal={Journal of mathematical analysis and applications},
  volume={113},
  number={1},
  pages={12--35},
  year={1986},
  publisher={Elsevier}
}

@article{arnold2001convex,
  title={On convex Sobolev inequalities and the rate of convergence to equilibrium for Fokker-Planck type equations},
  author={Arnold, Anton and Markowich, Peter and Toscani, Giuseppe and Unterreiter, Andreas},
  journal={Communications in Partial Differential Equations },
  volume={26},
  year={2001},
  publisher={Taylor \& Francis}
}

\end{document}